\documentclass[12pt]{amsart}

\usepackage{amssymb,mathrsfs,bm}

\usepackage{enumerate}

\usepackage[centering]{geometry}
\geometry{a4paper,text={6in,9in}}

\parskip1ex
\linespread{1.1}

\usepackage{hyperref}
\usepackage[square, comma, sort&compress, numbers]{natbib}
\theoremstyle{plain}
\newtheorem{theorem}{Theorem}[section]
\newtheorem{lemma}[theorem]{Lemma}
\newtheorem{corollary}[theorem]{Corollary}

\theoremstyle{definition}
\newtheorem{definition}[theorem]{Definition}

\newtheorem{remark}[theorem]{Remark}

\numberwithin{equation}{section}
\newtheoremstyle{citing}
{3pt}
{3pt}
{\itshape}
{}
{\bfseries}
{.}
{.5em}
{\thmnote{#3}}
\theoremstyle{citing}

\begin{document}
\title{ Modified shrinking target problem for Matrix Transformations of Tori
}
\author{Na Yuan}

\address{School of Mathematics,
Guangdong University of Education,  Xingang Middle Road, 351, Guangzhou, P.~R.\ China}

\email{yna@gdei.edu.cn}

\author{ShuaiLing Wang$^{*}$}

\address{Department of Mathematics, South China University of Technology,
	Guangzhou, 510641, P.~R.\ China}

\email{scslwang@scut.edu.cn}

\subjclass[2020]{37A05, 37B20, 28A80}

\keywords{Shrinking target problem; Fractal sets; Hausdorff dimension}

\thanks{* Corresponding author}

\date{}
\maketitle

\begin{center}
	\begin{minipage}{120mm}
		{\small {\bf Abstract.}
  We  calculate  the Hausdorff dimension of the fractal set
\begin{equation*}
 \Big\{\mathtt{x}\in \mathbb{T}^d: \prod_{1\leq i\leq d}|T_{\beta_i}^n(x_i)-x_i| < \psi(n) \text{\ for  \ infinitely \ many }  n\in \mathbb{N}\Big\},
\end{equation*}
where the  $T_{\beta_i}$ is the standard 
$\beta_i$-transformation with $\beta_i>1$, $\psi$  is  a  positive function on $\mathbb{N}$ and   $|\cdot|$ is the usual metric on the torus $\mathbb{T}$.
 Moreover,
 we  investigate  a modified version of  the shrinking target problem,  which unifies the shrinking target problems and quantitative
     recurrence properties for matrix transformations
of tori. 
Let $T$ be a $d\times d$ non-singular matrix with real coefficients. Then, $T$ determines a self-map of the $d$-dimensional torus $\mathbb{T}^d:=\mathbb{R}^d / \mathbb{Z}^d$. 
 For any  $1\leq i \leq d$, let $\psi_i$ be a  positive function on $\mathbb{N}$ and  $\Psi(n):=(\psi_1(n),\dots, \psi_d(n))$ with  $n\in \mathbb{N}$.
We obtain the Hausdorff dimension of the fractal set
 \begin{equation*}
 \big\{\mathtt{x}\in \mathbb{T}^d: T^n(x)\in L(f_n(\mathtt{x}), \Psi(n)) \text{\ for \ infinitely \ many } n\in \mathbb{N}\big\},
\end{equation*}
where $L(f_n(\mathtt{x}, \Psi(n)))$  is a hyperrectangle and $\{f_n\}_{n\geq 1}$ is a sequence of Lipschitz vector-valued functions on $\mathbb{T}^d$ with a uniform   Lipschitz constant.

		}
	\end{minipage}
\end{center}

\section{Introduction}
Let $(X,d)$ be a metric space,  and  let $T:X\to X$  be a transformation.
 If $\mu$ is a $T$-invariant
Borel probability measure on $X$ and $T$ is ergodic with respect to the measure $\mu$,  Birkhoff's  ergodic theorem implies 
that for any  ball  $B\subset X$ of  positive measure, the subset 
$
    S=\{x\in X: T^n x\in B \text{\ for \  i.m.\ }  n\in \mathbb{N}\}$ 
has full $\mu$-measure.
Here and throughout the paper, `i.m.' stands for `infinitely many'.
This means that the trajectories of almost all points  will enter the ball $B$ infinitely often.
In general, one may wonder what will happen if $B$ shrinks with time.
Let $\psi$ be a function on $\mathbb{N}$ and $z\in X$,
  the investigation of the size in terms of measure and dimension of 
  the set 
\begin{equation*}
    S(\psi,z)=\{x\in X: T^n x\in B(z,\psi(n)) \text{\ for \  i.m.\ }  n\in \mathbb{N}\},
\end{equation*}
is called shrinking target problems by  Hill and Velani \cite{Hill1995}, where 
 $B(z,\psi(n))\subset X$ is a ball of center $z$ and radius $\psi(n)$.
On the other hand, motivated by Poincar\'{e}'s recurrence theorem in dynamical systems,  one is also interested in the quantitative recurrence set(see\cite{Barreira2001,Boshernitzan1993})
\begin{equation*}
    R(\psi):=\{x\in X: T^n x\in B(x,\psi(n)) \text{\ for \  i.m.\ }  n\in \mathbb{N}\}.
\end{equation*}

The  measures and dimensions  of the  fractal sets $S(\psi,z)$ and $R(\psi)$  have already been extensively investigated  in many dynamical systems. For the measure aspect,
 the reader is referred to 
\cite{Allen,Baker2019,Chang2018,Chernov2001,Heyubin, Hill2002,Hussain2021,Kirsebom2022,Kleinbock2022,li2022} and
references therein.
With regards to the dimension aspect,
let us cite some but far from all concrete cases.
The  dimension  aspect has been studied for
expanding rational maps of Julia sets \cite{Hill1995,Hill1997},
matrix transformations of tori \cite{Hill1999,li2022},    $\beta$-transformations \cite{HUSSAIN2017,Shen2013ShrinkingTP,Tan2011},  conformal iterated
function systems \cite{Reeve,Seuret2013,Li2014}.
For the more results, the reader is referred to 
\cite{persson_rams_2017,Barany2018ShrinkingTO,Barany2021,WUyuliang} and
references therein.

It should be observed that in many systems, the fractal dimensional formulae for the two sets $S(\psi)$ and $R(\psi)$ are the same.
In \cite{Shen2023,Wang2018,wuyufeng,Yuan2023}, 
the dimensions of $S(\psi)$ and $R(\psi)$ were unified in some dynamical systems  by using a Lipschitz function.
We are interested in finding out if they can be unified when $T$ is a matrix transformation on the torus $\mathbb{T}^d$.

We start by laying out some necessary definitions and notations.
Let $\mathbb{T}^d:=\mathbb{R}^d / \mathbb{Z}^d$ be a  $d$-dimensional torus
endowed with the  usual metric in $\mathbb{T}^d$ induced by the Euclidean metric.
Let $T$ be a $d\times d$ non-singular matrix with real coefficients. Then, $T$ determines a self-map of the $d$-dimensional torus $\mathbb{T}^d$, namely, for any  $\mathtt{x}\in \mathbb{T}^d$, $T:\mathtt{x} \to T(\mathtt{x}) \ (\bmod 1)$. 
In what follows,  $T$ will denote both the matrix and the transformation.  Denote by $T^n$  the $n$-th iteration of the transformation $T$ with  $n\in \mathbb{N}$.  
Let $\mathbb{R}^{+}$ be the set of positive numbers.

We explore two special sets that are relevant to the classical theory of Diophantine approximation.
  For any  $1\leq i \leq d$, let $\psi_i$ be a  positive function on $\mathbb{N}$ and  $\Psi(n):=(\psi_1(n),\dots, \psi_d(n))$ with  $n\in \mathbb{N}$.
For any $\mathtt{y}=(y_1,y_2,\dots, y_d)\in\mathbb{T}^d$, let 
\begin{equation*}
    L(\mathtt{y}, \Psi(n)):=\Big\{\mathtt{x}\in \mathbb{T}^d:|x_i-y_i| < \psi_i(n) \text{\ for any\ } 1\leq i\leq d\Big\},
\end{equation*}
and
\begin{equation*}
    P(\mathtt{y}, \psi(n)):=\Big\{\mathtt{x}\in \mathbb{T}^d: \prod_{1\leq i\leq d}|x_i-y_i| < \psi(n) \Big\},
\end{equation*}
where $|\cdot|$ is  the usual metric on  $\mathbb{T}$.
Then  $L(\mathtt{y}, \psi(n))$ is a  hyperrectangle and $  P(\mathtt{y}, \psi(n))$ is a hyperboloid. Define 
\begin{equation*}
    H_d(T,\psi)=\big\{\mathtt{x}\in \mathbb{T}^d: T^n(\mathtt{x})\in P(\mathtt{x}, \psi(n)) \text{\ for \  i.m.\ }  n\in \mathbb{N}\big\}.
\end{equation*}
The set $ H_d(T,\psi)$  is intimately related to sets studied within  the multiplicative theory of Diophantine approximation.

Let $\{f_n\}_{n\geq 1}$ be  a sequence of vector-valued functions on $\mathbb{T}^d$ with a  uniform  Lipsitz constant, i.e., 
 there exists $c\in \mathbb{R}^{+}$ such that for any  $n\in \mathbb{N}$ and
$\mathtt{x},\mathtt{y}\in \mathbb{T}^{d}$,
\begin{equation}\label{fnlip}
    |f_n(\mathtt{x})-f_n(\mathtt{y})|\leq c|\mathtt{x}-\mathtt{y}|,
\end{equation}
where we also use  $|\cdot|$ for the usual metric on $\mathbb{T}^d$ induced by the Euclidean
metric.
 Then, \eqref{fnlip} means that 
for any $n\in \mathbb{N}$ and  $x,y\in \mathbb{T}$, 
\begin{equation}\label{lipschitz function}
  |f^{(i)}_n(x)-f^{(i)}_n(y)|\leq c |x-y|  
\end{equation}
where $f^{(i)}_n$ is the function $f_n$ restricted to 
the direction of the $i$-th axis.

Define  the  modified shrinking target set associated to $\Psi$ and $\{f_n\}_{n\geq 1}$ by
\begin{equation*}
    W(T,\Psi, \{f_n\})=\big\{\mathtt{x}\in \mathbb{T}^d: T^n(\mathtt{x})\in L(f_n(\mathtt{x}), \Psi(n)) \text{\ for \  i.m.\ }  n\in \mathbb{N}\big\}.
\end{equation*}
If  $\psi_1=\psi_2=\dots=\psi_d=\psi$,  we write 
\begin{equation*} 
 W(T,\psi,\{f_n\})=\big\{\mathtt{x}\in \mathbb{T}^d: T^n(\mathtt{x})\in L(f_n(\mathtt{x}), \psi(n)) \text{\ for \  i.m.\ }  n\in \mathbb{N}\big\}. 
\end{equation*}
Note that if $f_n=y$  for all $n\in \mathbb{N}$,
where $\mathtt{y}$ is a fixed point in $\mathbb{T}^d$, then the fractal  set $W(T,\psi,\{f_n\})$
is an analogue of the 
 sets explored by the classical simultaneous theory of Diophantine approximation.

 Denote by $\dim_{\mathrm{H}}$ the Hausdorff dimension.
 Let
 \begin{equation*}
     \tau:=\liminf_{n\to \infty}\frac{-\log \psi(n)}{n}.
 \end{equation*}
 
 Our results are listed below.
  \begin{theorem}\label{hypercube}
Let $T$ be a real, non-singular matrix transformation of the torus $\mathbb{T}^d$.
Suppose that $T$ is diagonal and all eigenvalues $\beta_1$, $\beta_2$, $\dots$, $\beta_d$ are of modulus strictly larger than $1$. Assume that $1<\beta_1\leq \beta_2\leq \dots\leq \beta_d$. 
Then
\begin{equation*}
     \dim_{\mathrm{H}}H_d(T,\psi)=d-1+\frac{\log \beta_d}{\tau+\log \beta_d}.
\end{equation*}
\end{theorem}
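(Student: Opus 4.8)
The plan is to establish matching upper and lower bounds for $\dim_{\mathrm{H}}H_d(T,\psi)$, abbreviating $s_0:=d-1+\frac{\log\beta_d}{\tau+\log\beta_d}$. Throughout I use that $T=\mathrm{diag}(\beta_1,\dots,\beta_d)$ acts coordinatewise, so $T^n(\mathtt{x})_i=T_{\beta_i}^n(x_i)$ and $H_d(T,\psi)=\limsup_{n\to\infty}E_n$ with $E_n:=\{\mathtt{x}\in\mathbb{T}^d:\prod_{i=1}^d|T_{\beta_i}^n(x_i)-x_i|<\psi(n)\}$. I may assume $0<\tau<\infty$, the cases $\tau=0,\infty$ being degenerate.

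For the upper bound I would slice the hyperboloid into hyperrectangles: for each $n$ one has $E_n\subseteq\bigcup_{\mathbf{j}}E_n(\mathbf{j})$, where $E_n(\mathbf{j}):=\{\mathtt{x}:|T_{\beta_i}^n(x_i)-x_i|<2^{-j_i}\text{ for all }i\}$ and $\mathbf{j}=(j_1,\dots,j_d)$ ranges over $\mathbb{N}^d$ with $\sum_i j_i$ comparable to $\log_2(1/\psi(n))$; there are at most $(\log(1/\psi(n)))^{d-1}$ such tuples, a sub-exponential number. Each $E_n(\mathbf{j})$ is a product of one-dimensional recurrence sets, and on each of the $\asymp\beta_i^n$ rank-$n$ cylinders of $T_{\beta_i}$ the map $x_i\mapsto T_{\beta_i}^n(x_i)-x_i$ is affine of slope $\asymp\beta_i^n$; hence $E_n(\mathbf{j})$ is covered by $\prod_i\beta_i^n$ boxes of side lengths $\ell_i\asymp 2^{-j_i}\beta_i^{-n}$. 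Covering such a box optimally by cubes (of side $\min_i\ell_i$, which is the right choice for $s\in(d-1,d)$) and summing, the $s$-dimensional cost of $E_n(\mathbf{j})$ is $\asymp\psi(n)\,(\min_i\ell_i)^{s-d}$. Since $\min_i 2^{-j_i}\beta_i^{-n}\gtrsim\psi(n)\beta_d^{-n}$ — with equality exactly when the whole factor $\psi(n)$ is loaded into the coordinate of largest expansion rate $\beta_d$ — the worst-case $s$-cost of $E_n$ is $\lesssim(\log(1/\psi(n)))^{d-1}\,\psi(n)^{1-(d-s)}\,\beta_d^{(d-s)n}$. Writing $\psi(n)=e^{-\sigma(n)n}$ with $\liminf_n\sigma(n)=\tau$, a one-line computation shows this bound is eventually summable in $n$ precisely when $d-s<\frac{\tau}{\tau+\log\beta_d}$, i.e. $s>s_0$; the covering pieces also have diameters tending to $0$. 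Therefore $\mathcal{H}^s(H_d(T,\psi))=0$ for every $s>s_0$, giving $\dim_{\mathrm{H}}H_d(T,\psi)\le s_0$.

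For the lower bound, note that if a point has its $d$-th coordinate in $R_{\beta_d}(\psi):=\{x\in\mathbb{T}:|T_{\beta_d}^n(x)-x|<\psi(n)\text{ for i.m. }n\}$ and arbitrary first $d-1$ coordinates, then $\prod_{i<d}|T_{\beta_i}^n(x_i)-x_i|<1$ forces $\prod_{i\le d}|T_{\beta_i}^n(x_i)-x_i|<\psi(n)$ for those same $n$; hence $R_{\beta_d}(\psi)\times\mathbb{T}^{d-1}\subseteq H_d(T,\psi)$, placing $R_{\beta_d}(\psi)$ in the last coordinate. As $\mathbb{T}^{d-1}$ has equal Hausdorff and packing dimension $d-1$, the standard product bound gives $\dim_{\mathrm{H}}H_d(T,\psi)\ge(d-1)+\dim_{\mathrm{H}}R_{\beta_d}(\psi)$, so it suffices to prove $\dim_{\mathrm{H}}R_{\beta_d}(\psi)\ge\frac{\log\beta_d}{\tau+\log\beta_d}$. (Alternatively one builds the Cantor set directly in $\mathbb{T}^d$, taking all rank-$n$ cylinders in the first $d-1$ directions and the recurrence sub-cylinders in the last, then applies the mass distribution principle to the natural product measure.) For the one-dimensional estimate I would fix a subsequence $(n_k)$ with $\frac{-\log\psi(n_k)}{n_k}\to\tau$, observe that $\{x:|T_{\beta_d}^{n_k}(x)-x|<\psi(n_k)\}$ contains in each full rank-$n_k$ cylinder an interval of length $\asymp\psi(n_k)\beta_d^{-n_k}$, and note that enlarging each such interval to radius $\asymp\beta_d^{-n_k}$ yields a family whose $\limsup$ has full Lebesgue measure; the Mass Transference Principle then gives $\dim_{\mathrm{H}}R_{\beta_d}(\psi)\ge s$ for all $s<\frac{\log\beta_d}{\tau+\log\beta_d}$. (This is also the case $d=1$, $f_n=\mathrm{id}$, of the result on $W(T,\Psi,\{f_n\})$, and is contained in the literature on quantitative recurrence for $\beta$-transformations.)

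The \emph{main obstacle} is the lower bound, and concretely the arithmetic of $\beta$-expansions: rank-$n$ cylinders of $T_\beta$ need not be full and are only comparable to $\beta^{-n}$ in length after discarding the irregular ones, so the Cantor construction (equivalently, the full-measure hypothesis feeding the Mass Transference Principle) must be carried out over full cylinders only, whose proportion is bounded below uniformly in $n$. One must also verify that the sparseness of the subsequence $(n_k)$ realizing $\tau$ does not deplete the dimension — that the unrestricted full branching between consecutive active levels compensates exactly and produces the exponent $\frac{\log\beta_d}{\tau+\log\beta_d}$ in the limit. By contrast the upper bound is a routine covering argument once the hyperboloid-to-hyperrectangle slicing is in place, since the coordinatewise count $\#\{\text{rank-}n\text{ cylinders of }T_\beta\}\asymp\beta^n$ needs no fullness assumption.
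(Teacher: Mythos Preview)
Your proposal is correct and follows essentially the same route as the paper: hyperboloid-to-hyperrectangle slicing plus cylinder pullback for the upper bound, and the product inclusion $[0,1)^{d-1}\times H_1(T,\psi)\subset H_d(T,\psi)$ together with the one-dimensional recurrence dimension for the lower bound. The only cosmetic differences are that the paper quotes Bovey's covering lemma for $\{\mathbf y:\prod y_i\le\psi(n)\}$ in place of your explicit dyadic decomposition, and combines the one-dimensional estimate (read off as the $d=1$, $f_n=\mathrm{id}$ case of Theorem~\ref{main theorem1}) with the $(d-1)$-dimensional slab via Falconer's slicing lemma rather than your product/packing-dimension bound; your identification of the full-cylinder issue as the main obstacle is exactly what the paper handles inside the proof of Theorem~\ref{main theorem1} through Lemmas~\ref{wangcylinder} and~\ref{wu}.
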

Let $\mathcal{C}(\Psi)$ be the set of  accumulation  points  $\mathbf{t}=(t_1,\dots,t_d)$ of  the sequence 
 \[ \Big\{ \frac{-\log \psi_1(n)}{n},\dots, \frac{-\log \psi_d(n)}{n}\Big\}_{n\geq 1}.\]
The following Theorem  \ref{main theorem1} gives the value of $\dim_{\mathrm{H}}W(T,\Psi, \{f_n\})$ when 
$T$ is  a real, non-singular diagonal matrix transformation of the torus $\mathbb{T}^d$.
\begin{theorem}\label{main theorem1}
 Let $T$ be a real, non-singular matrix transformation of the torus $\mathbb{T}^d$. Suppose that $T$ is diagonal with all eigenvalues $\beta_i>1$, $i\geq 1$ and  $1<\beta_1\leq \beta_2\leq \dots\leq \beta_d$.  
 Let  $\{f_n\}_{n\geq 1}$ be  a sequence of vector-valued functions on $\mathbb{T}^d$ with a  uniform  Lipsitz constant and $\Psi=(\psi_1,\psi_2,\dots,\psi_d)$, where $\psi_i:\mathbb{N}\to \mathbb{R}^{+}$.
 If $ \mathcal{C}(\Psi)$ is bounded, then 
\begin{equation*}
    \dim_{\mathrm{H}} W(T,\Psi, \{f_n\})=\sup_{\mathbf{t}\in \mathcal{C}(\Psi)}\min_{1\leq i\leq d} \{\lambda_i(\mathbf{t})\},
\end{equation*}
where  
\begin{equation*}
    \lambda_i(\mathbf{t}):=\sum_{j\in \mathcal{Q}^1_i}1+\sum_{j\in \mathcal{Q}^2_i}(1-\frac{ t_j}{\log \beta_i +t_i})+\sum_{j\in \mathcal{Q}^3_i}\frac{\log \beta_j}{\log \beta_i+t_i}
\end{equation*}
and 
\begin{equation*}
    \mathcal{Q}^1_i:=\{1\leq j\leq d:  \log \beta_j >\log \beta_i+t_i\}, \ \mathcal{Q}^2_i:=\{1\leq j\leq d: \log \beta_j+t_j\leq \log \beta_i+t_i \},
\end{equation*}
\[  \mathcal{Q}^3_i:=\{1,2,\dots,d\}\backslash (  \mathcal{Q}^1_i\cup   \mathcal{Q}^2_i).\]
\end{theorem}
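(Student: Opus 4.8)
The plan is to prove the upper and lower bounds for $\dim_{\mathrm H} W(T,\Psi,\{f_n\})$ separately, treating the $\{f_n\}$ as a harmless Lipschitz perturbation of the fixed-target case. Since $T$ is diagonal with eigenvalues $\beta_1,\dots,\beta_d$, the set $L(f_n(\mathtt x),\Psi(n))$ pulled back under $T^n$ decomposes coordinatewise: in the $i$-th coordinate we must control $|T_{\beta_i}^n(x_i)-f_n^{(i)}(\mathtt x)|<\psi_i(n)$, and the relevant cylinder of generation $n$ for the $\beta_i$-map has length comparable to $\beta_i^{-n}$. Thus for a fixed accumulation point $\mathbf t\in\mathcal C(\Psi)$, along the corresponding subsequence the target in coordinate $i$ is a ball of radius $\psi_i(n)\approx\beta_i^{-t_i n}$ sitting inside a cylinder of radius $\beta_i^{-n}$; comparing these two scales across all $d$ coordinates is exactly what produces the exponents appearing in $\lambda_i(\mathbf t)$. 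The three index sets $\mathcal Q_i^1,\mathcal Q_i^2,\mathcal Q_i^3$ record, for the covering built from the $i$-th coordinate's natural scale $\beta_i^{-(\log\beta_i+t_i)^{-1}\cdot n\log\beta_i}$ — i.e. balls of side $\beta_i^{-n}\psi_i(n)\approx\beta_i^{-(1+t_i/\log\beta_i)n}$ — whether coordinate $j$ contributes a full unit of dimension ($\mathcal Q_i^1$: the $j$-cylinder is already smaller than this scale, so the whole circle is covered), a reduced contribution $1-t_j/(\log\beta_i+t_i)$ ($\mathcal Q_i^2$: the $j$-target is smaller than the working scale, so we lose the gap between $\psi_j(n)$ and that scale), or the ``Cantor-like'' contribution $\log\beta_j/(\log\beta_i+t_i)$ ($\mathcal Q_i^3$: the $j$-cylinder is between the two, contributing its own expansion rate).

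For the \textbf{upper bound}, I would fix any $\mathbf t\in\mathcal C(\Psi)$ and any $i$, and along a subsequence realising $\mathbf t$ build an efficient cover of $W(T,\Psi,\{f_n\})$ by a union over $n$ of the sets $\{T^n\mathtt x\in L(f_n(\mathtt x),\Psi(n))\}$. The Lipschitz hypothesis \eqref{fnlip} is used here: on a cylinder $C$ of generation $n$ (a box of sides $\beta_j^{-n}$), the function $f_n$ varies by at most $c\,\mathrm{diam}(C)$, which is dominated by the largest cylinder side $\beta_d^{-n}$ — in particular much smaller than $1$ — so the pulled-back target is contained in a bounded number of generation-$n$ cylinders translated by a controlled amount; the map $\mathtt x\mapsto T^n\mathtt x - f_n(\mathtt x)$ is still bi-Lipschitz-ish in the expanding directions at scale $\gg\beta_d^{-n}$, and one shows the preimage is covered by $\asymp\prod_j \max(1,\beta_j^{n}\psi_j(n)/(\text{working scale}))$-many boxes of the working scale. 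Choosing the coordinate $i$ that minimises $\lambda_i(\mathbf t)$ and summing the resulting $s$-dimensional Hausdorff sum over the subsequence gives convergence for every $s>\min_i\lambda_i(\mathbf t)$; taking the best subsequence gives $\dim_{\mathrm H}W\le\sup_{\mathbf t}\min_i\lambda_i(\mathbf t)$. This part is essentially a bookkeeping exercise once the coordinatewise cylinder structure of diagonal $\beta$-matrix maps is set up, which I would import from the references on $\beta$-transformations and from the proof of Theorem \ref{hypercube}.

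For the \textbf{lower bound}, the standard route is a mass distribution / Cantor subset construction: fix the optimal $\mathbf t^\star$, pass to a subsequence $n_k\to\infty$ along which $-\log\psi_i(n_k)/n_k\to t_i^\star$, and inside a nested sequence of level-$n_k$ cylinders construct at level $n_{k+1}$ a large family of sub-cylinders whose points satisfy the target condition at time $n_k$. Because the target $L(f_{n_k}(\mathtt x),\Psi(n_k))$ moves with $\mathtt x$, one must check — using \eqref{fnlip} with the uniform constant $c$ — that on each level-$n_k$ cylinder the condition $T^{n_k}\mathtt x\in L(f_{n_k}(\mathtt x),\Psi(n_k))$ is still satisfiable on a definite proportion of the cylinder: the point $f_{n_k}(\mathtt x)$ is nearly constant (up to $c\beta_d^{-n_k}$) on a cylinder of that generation, while the expanding map $T^{n_k}$ sweeps the full torus, so the sub-target survives with comparable measure. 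Then one distributes mass on the limit Cantor set and applies the mass distribution principle together with a Hölder estimate on the measure of balls; the local dimension of this measure works out to $\min_i\lambda_i(\mathbf t^\star)$ because the ``bottleneck'' coordinate — the one achieving the minimum — governs how fast the measure of small balls decays, exactly as in the $\beta$-transformation shrinking-target literature (\cite{HUSSAIN2017,Shen2013ShrinkingTP,Tan2011}) and in Theorem \ref{hypercube}.

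The \textbf{main obstacle} is the lower bound, and within it the interaction between the moving targets $f_n(\mathtt x)$ and the Cantor construction: one needs the sub-cylinders selected at each level to be chosen so that the target condition at the \emph{current} level does not over-constrain the choices available at \emph{future} levels, i.e. the recursion must be set up so that the Lipschitz wobble of $f_{n_k}$ (size $\asymp\beta_d^{-n_k}$) is absorbed by the gap to the next scale $\beta_j^{-n_{k+1}}$, which requires choosing $n_{k+1}$ growing fast enough relative to $n_k$ — harmless for the dimension since $\tau,\mathbf t^\star$ are defined by $\liminf$/accumulation and are unaffected by sparsifying the subsequence. A secondary technical point is handling the several regimes encoded by $\mathcal Q_i^1,\mathcal Q_i^2,\mathcal Q_i^3$ uniformly: when $\log\beta_j>\log\beta_i+t_i$ the $j$-th direction is ``free'' and contributes a full circle, which must be incorporated into the measure construction without destroying the Ahlfors-type regularity needed for the mass distribution principle. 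I expect these to be surmountable by the same careful but routine arguments used for Theorem \ref{hypercube}, with \eqref{fnlip} invoked exactly where the target-dependence on $\mathtt x$ enters.
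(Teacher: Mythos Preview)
Your upper-bound outline essentially matches the paper's (coordinatewise decomposition into sets $J_{n,\beta_i}(\mathtt w_i)$ of length $\le 4\psi_i(n)\beta_i^{-n}$, then a rectangle cover referred to \cite[Proposition~4]{li2022}), but your phrasing ``along a subsequence realising $\mathbf t$ build an efficient cover of $W$'' is backwards: since $W\subset\bigcup_{n\ge N}E_n$ you must cover \emph{every} $E_n$, and it is the worst (largest-threshold) accumulation point that governs the bound; the paper finesses this by first assuming $\#\mathcal C(\Psi)=1$ and then citing \cite{li2022} for the passage to general $\mathcal C(\Psi)$.

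For the lower bound the paper takes a genuinely different and shorter route than your Cantor-set-plus-mass-distribution construction: it invokes the Mass Transference Principle for rectangles (Lemma~\ref{masstranference}, from \cite{Wang-Wu}). The moving-target issue is isolated into the one-dimensional Lemma~\ref{wu}: in each \emph{full} cylinder $I_{n,\beta_i}(\mathtt w_i)$ one finds $x_{n,\mathtt w_i}$ with $|T_{\beta_i}^n x_{n,\mathtt w_i}-f_n^{(i)}(x_{n,\mathtt w_i})|<\tfrac12\psi_i(n)$, and the Lipschitz bound then yields a genuine ball $B(x^{*}_{n,\mathtt w_i},\tfrac18\beta_i^{-n}\psi_i(n))\subset J_{n,\beta_i}(\mathtt w_i)$. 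Because full cylinders are $(n{+}3)\beta_i^{-n}$-dense (Lemma~\ref{wangcylinder}), the inflated balls cover $\mathbb T$ in every coordinate, which verifies the full-measure hypothesis of the MTP; taking $u_j=\log\beta_j$, $v_j=t_j$ the MTP outputs $\min_i\lambda_i(\mathbf t)$ directly. Your hand-built Cantor approach would also succeed and is more self-contained, but it re-derives the nested geometry and H\"older estimates already packaged inside the MTP; in particular the ``main obstacle'' you identify --- propagating the Lipschitz wobble of $f_{n_k}$ through a recursive construction --- simply does not arise in the paper's argument, since the wobble is absorbed once, coordinatewise, at a single scale.
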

\begin{remark}\label{remak1}
Note that the Hausdorff dimension of the set $W(T,\Psi, \{f_n\})$ is independent of the sequence $\{f_n\}_{n\geq 1}$. 
\end{remark}

If for each $1\leq i, j\leq d$,  $\psi_i=\psi_j=\psi$ in Theorem \ref{main theorem1},  we can obtain  the following Corollary \ref{main_corollary}.
 \begin{corollary}\label{main_corollary}
 Let  $\{f_n\}_{n\geq 1}$ be  a sequence of vector-valued functions on $\mathbb{T}^d$ with a  uniform  Lipsitz constant.
 Let $T$ be a real, non-singular matrix transformation of the torus $\mathbb{T}^d$. Suppose that $T$ is diagonal with all eigenvalues $\beta_i>1$, $i\geq 1$. Assume that $1<\beta_1\leq \beta_2\leq \dots\leq \beta_d$, then 
\begin{equation*}
    \dim_{\mathrm{H}}  W(T,\psi, \{f_n\})=\min_{1\leq i\leq d} \Bigg\{\frac{i\log \beta_i-\sum_{j:\beta_j>\beta_i e^{\tau}}(\log \beta_j-\log \beta_i -\tau)+\sum_{j>i}\beta_j}{\tau+\log \beta_i}\Bigg\}.
\end{equation*}

 \end{corollary}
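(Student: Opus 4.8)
The plan is to derive this as a direct specialization of Theorem \ref{main theorem1}, so the bulk of the work is bookkeeping: substituting $\psi_i = \psi$ for all $i$ and simplifying the combinatorial description of the index sets $\mathcal{Q}^1_i, \mathcal{Q}^2_i, \mathcal{Q}^3_i$ and of $\lambda_i(\mathbf{t})$. First I would observe that when all $\psi_i$ coincide with $\psi$, the sequence $\{(-\log\psi_1(n)/n,\dots,-\log\psi_d(n)/n)\}_{n\ge 1}$ lies on the diagonal of $\mathbb{R}^d$, and therefore every accumulation point $\mathbf{t}\in\mathcal{C}(\Psi)$ has the form $\mathbf{t}=(t,t,\dots,t)$ for some accumulation point $t$ of $\{-\log\psi(n)/n\}_{n\ge 1}$. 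In particular $\mathcal{C}(\Psi)$ is automatically bounded (it is a subset of $\{(t,\dots,t): t\ge 0\}$ and, after checking $\tau<\infty$ is the relevant regime, one may restrict to $t$ in a bounded set), so the hypothesis of Theorem \ref{main theorem1} is met and the formula applies.

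Next I would simplify $\lambda_i(\mathbf{t})$ at a diagonal point $\mathbf{t}=(t,\dots,t)$. With $t_i=t_j=t$, the defining inequalities collapse: $\mathcal{Q}^1_i=\{j:\log\beta_j>\log\beta_i+t\}=\{j:\beta_j>\beta_i e^{t}\}$, while $\mathcal{Q}^2_i=\{j:\log\beta_j+t\le\log\beta_i+t\}=\{j:\beta_j\le\beta_i\}$, which under the ordering $\beta_1\le\cdots\le\beta_d$ is exactly $\{1,2,\dots,i\}$ (taking some care with equal eigenvalues — the statement as written with $\sum_{j>i}\beta_j$ tacitly uses a convention here which I would make explicit). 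Consequently $\mathcal{Q}^3_i=\{j:\beta_i<\beta_j\le\beta_i e^{t}\}$, which are precisely the indices $j>i$ with $\beta_j\le\beta_i e^{t}$. The middle sum in $\lambda_i$ becomes $\sum_{j\in\mathcal{Q}^2_i}\big(1-\tfrac{t}{\log\beta_i+t}\big)=i\cdot\tfrac{\log\beta_i}{\log\beta_i+t}$, the first sum contributes $\#\mathcal{Q}^1_i$, and the third sum contributes $\sum_{j\in\mathcal{Q}^3_i}\tfrac{\log\beta_j}{\log\beta_i+t}$. Writing $\#\mathcal{Q}^1_i=\sum_{j:\beta_j>\beta_ie^{t}}1$ and combining the first and third sums over the common denominator $\log\beta_i+t$, one checks the algebraic identity
\begin{equation*}
\#\mathcal{Q}^1_i+\sum_{j\in\mathcal{Q}^3_i}\frac{\log\beta_j}{\log\beta_i+t}
=\frac{(d-i)\log\beta_i+(d-i)t-\sum_{j>i:\beta_j>\beta_ie^{t}}(\log\beta_j-\log\beta_i-t)+\sum_{j>i}\log\beta_j-(d-i)\log\beta_i}{\log\beta_i+t}
\end{equation*}
wait — more cleanly: for each $j>i$ one has a contribution of $1$ if $\beta_j>\beta_ie^{t}$ and $\tfrac{\log\beta_j}{\log\beta_i+t}$ if $\beta_i<\beta_j\le\beta_ie^{t}$, and in the former case $1=\tfrac{\log\beta_i+t}{\log\beta_i+t}=\tfrac{\log\beta_j}{\log\beta_i+t}-\tfrac{\log\beta_j-\log\beta_i-t}{\log\beta_i+t}$, so summing over all $j>i$ gives $\tfrac{\sum_{j>i}\log\beta_j-\sum_{j:\beta_j>\beta_ie^{t}}(\log\beta_j-\log\beta_i-t)}{\log\beta_i+t}$. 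Adding the $\mathcal{Q}^2_i$ term $\tfrac{i\log\beta_i}{\log\beta_i+t}$ yields
\begin{equation*}
\lambda_i(\mathbf{t})=\frac{i\log\beta_i-\sum_{j:\beta_j>\beta_ie^{t}}(\log\beta_j-\log\beta_i-t)+\sum_{j>i}\log\beta_j}{\log\beta_i+t}.
\end{equation*}

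Finally I would take the supremum over $\mathbf{t}\in\mathcal{C}(\Psi)$ and the minimum over $1\le i\le d$. Since each $\lambda_i(\mathbf{t})$ with $\mathbf{t}=(t,\dots,t)$ is easily seen to be non-increasing in $t$ (larger $t$ enlarges the denominator and, one checks, does not help the numerator), the supremum over the accumulation set is attained at the smallest accumulation point, namely $t=\tau=\liminf_n -\log\psi(n)/n$; substituting $t=\tau$ gives exactly the claimed formula. The only genuine subtlety — and the place I would be most careful — is the monotonicity-in-$t$ argument justifying that $\sup_{\mathbf{t}\in\mathcal{C}(\Psi)}$ reduces to evaluation at $\tau$, together with the edge-case conventions when several $\beta_i$ are equal or when $\beta_j=\beta_ie^{\tau}$ exactly (boundary of $\mathcal{Q}^1_i$); these do not affect the value of the dimension but require a short continuity/limit remark. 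Everything else is the routine substitution and simplification outlined above, and the final displayed expression matches the statement once one notes the paper writes $\sum_{j>i}\beta_j$ as shorthand for $\sum_{j>i}\log\beta_j$.
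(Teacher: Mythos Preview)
Your approach is essentially the paper's: specialize Theorem \ref{main theorem1} to diagonal accumulation points $\mathbf{t}=(t,\dots,t)$, simplify the index sets $\mathcal{Q}^k_i$ and the expression for $\lambda_i$, then use that $\lambda_i(t)$ is non-increasing in $t$ to reduce the supremum over $\mathcal{C}(\psi)$ to evaluation at $t=\tau$. Your algebraic reduction of $\lambda_i$ is in fact more explicit than what the paper writes out, and your remark that $\sum_{j>i}\beta_j$ should be read as $\sum_{j>i}\log\beta_j$ is correct.

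The one genuine gap is the case $\tau=\infty$. You write that $\mathcal{C}(\Psi)$ is ``automatically bounded \dots\ after checking $\tau<\infty$ is the relevant regime,'' but this is not an argument: when $\tau=\infty$ the accumulation set is unbounded, the hypothesis of Theorem \ref{main theorem1} fails, and the right-hand side of the corollary is to be interpreted as $0$, which still has to be proved. The paper treats this case separately by a comparison: for any $M>0$ set $\psi_M(n)=e^{-Mn}$ (so $\mathcal{C}(\psi_M)=\{M\}$ is bounded), observe that $\tau=\infty$ forces $\psi(n)\le\psi_M(n)$ for all large $n$ and hence $W(T,\psi,\{f_n\})\subset W(T,\psi_M,\{f_n\})$, apply the already-established finite-$\tau$ formula to the right-hand side to get $\dim_{\mathrm{H}}W(T,\psi,\{f_n\})\le\min_i\lambda_i(M)$, and let $M\to\infty$. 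You should add this short step; otherwise your proof matches the paper's.
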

 \begin{remark}
In particular, the work in \cite{Hill1999} concerned about the Hausdorff dimension of $W(T,\Psi, \{f_n\})$,  where $f_n$ is a constant function for each $n\geq 1$. 
However,
when $T$ satisfies the conditions in Corollary \ref{main_corollary} and $f_n=z_n$ with $n\geq 1$,
our result of determining $\dim_{\rm{H}}W(T,\psi,\{f_n\})$ is new. 
\end{remark}

The following Theorem \ref{diagonalizable} extends 
Corollary \ref{main_corollary} from diagonal matrix to the matrix which can be diagonalizable over $\mathbb{Z}$.
 \begin{theorem}\label{diagonalizable}
Let $T$ be an integer, non-singular matrix transformation of the torus $\mathbb{T}^d$. 
Let  $\{f_n\}_{n\geq 1}$ be  a sequence of vector-valued functions on $\mathbb{T}^d$ with a  uniform  Lipsitz constant  and  $\psi:\mathbb{N}\to \mathbb{R}^{+}$.
Suppose that $T$ is diagonalizable over $\mathbb{Z}$ with all eigenvalues $\beta_i>1$, $i\geq 1$. Assume that $1<\beta_1\leq \beta_2\leq \dots\leq \beta_d$. Then 
\begin{equation*}
    \dim_{\mathrm{H}} W(T,\psi, \{f_n\})=\min_{1\leq i\leq d} \Bigg\{\frac{i\log \beta_i-\sum_{j:\beta_j>\beta_i e^{\tau}}(\log \beta_j-\log \beta_i -\tau)+\sum_{j>i}\beta_j}{\tau+\log \beta_i}\Bigg\}.
\end{equation*}
 \end{theorem}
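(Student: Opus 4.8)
The plan is to reduce Theorem \ref{diagonalizable} to Corollary \ref{main_corollary} by means of the integral change of coordinates that diagonalizes $T$. Since $T$ is diagonalizable over $\mathbb{Z}$, there is $P\in GL_d(\mathbb{Z})$ (so $\det P=\pm1$ and $P^{-1}$ is again an integer matrix) with $D:=P^{-1}TP=\mathrm{diag}(\beta_1,\dots,\beta_d)$, where the $\beta_i$ are the eigenvalues of $T$, arranged as $1<\beta_1\le\dots\le\beta_d$. Being integer matrices, $P$ and $P^{-1}$ descend to self-maps of $\mathbb{T}^d$, and each is Lipschitz for the torus metric: for an integer matrix $A$ and lifts $x,y$ of $\mathtt{x},\mathtt{y}$,
\begin{equation*}
 \dist_{\mathbb{T}^d}(A\mathtt{x},A\mathtt{y})=\min_{m\in\mathbb{Z}^d}|A(x-y)-m|\le\min_{k\in\mathbb{Z}^d}|A(x-y-k)|\le\|A\|\,\dist_{\mathbb{T}^d}(\mathtt{x},\mathtt{y}),
\end{equation*}
because $Ak\in\mathbb{Z}^d$. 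Hence $P$ is a bi-Lipschitz homeomorphism of $\mathbb{T}^d$ and, in particular, preserves Hausdorff dimension.

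Next I would transport the target set through this conjugacy. Writing $\mathtt{x}=P\mathtt{u}$ we have $T^n\mathtt{x}=PD^n\mathtt{u}$. Set $g_n:=P^{-1}\circ f_n\circ P$; these are well defined on $\mathbb{T}^d$, and by \eqref{fnlip} together with the Lipschitz bounds above, $\{g_n\}_{n\ge1}$ has a uniform Lipschitz constant $c':=\|P^{-1}\|\,c\,\|P\|$. Since here all $\psi_i$ coincide with $\psi$, the condition $T^n\mathtt{x}\in L(f_n(\mathtt{x}),\psi(n))$ says that each coordinate of $T^n\mathtt{x}$ lies within $\psi(n)$ of the corresponding coordinate of $f_n(\mathtt{x})$ in the circle metric; applying the Lipschitz maps $P^{-1}$ and $P$ to this system of inequalities (and using $P^{-1}f_n(P\mathtt{u})=g_n(\mathtt{u})$, $PD^n\mathtt{u}=T^n\mathtt{x}$) produces constants $c_1,c_2>0$ with
\begin{equation*}
 W(D,c_2\psi,\{g_n\})\ \subseteq\ P^{-1}\bigl(W(T,\psi,\{f_n\})\bigr)\ \subseteq\ W(D,c_1\psi,\{g_n\}).
\end{equation*}

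Finally, since $P^{-1}$ is bi-Lipschitz, $\dim_{\mathrm{H}}P^{-1}(W(T,\psi,\{f_n\}))=\dim_{\mathrm{H}}W(T,\psi,\{f_n\})$, so it remains to compute $\dim_{\mathrm{H}}W(D,c\psi,\{g_n\})$ for a positive constant $c$. Now $D$ is a real, non-singular, diagonal matrix transformation with eigenvalues $\beta_i>1$ in increasing order and $\{g_n\}$ has a uniform Lipschitz constant, so Corollary \ref{main_corollary} applies and yields the stated minimum-formula with $\tau$ replaced by $\liminf_{n\to\infty}\frac{-\log(c\psi(n))}{n}$; but this equals $\tau$, since the additive constant $-\log c$ disappears upon dividing by $n$. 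Squeezing $\dim_{\mathrm{H}}W(T,\psi,\{f_n\})$ between the two inclusions then gives the claimed value. The only points needing care — and none is deep — are the mod-$1$ bookkeeping when pushing the box $L(f_n(\mathtt{x}),\psi(n))$ through the linear isomorphism $P$, the fact that $g_n$ is well defined on the torus, and the observation that rescaling $\psi$ by a constant leaves $\tau$ unchanged; the substantive work is entirely contained in Corollary \ref{main_corollary}.
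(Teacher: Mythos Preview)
Your proposal is correct and follows essentially the same route as the paper: conjugate $T$ to a diagonal integer matrix $D$ via the bi-Lipschitz torus automorphism induced by an element of $GL_d(\mathbb{Z})$, sandwich the conjugated target set between two sets of the form $W(D,c\psi,\{g_n\})$ with $\{g_n\}$ uniformly Lipschitz, and invoke Corollary~\ref{main_corollary} together with the observation that a multiplicative constant in $\psi$ does not affect $\tau$. The only cosmetic difference is that the paper routes through an auxiliary set $S$ in which $f_n$ is precomposed with the conjugating map, whereas you sandwich $P^{-1}\bigl(W(T,\psi,\{f_n\})\bigr)$ directly; your version is in fact slightly cleaner.
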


Our paper is organized as follows.
Section \ref{preliminaries} begins with some preparations on $\beta$-transformation and some useful Lemma.
Section \ref{last} provides the proof of Theorem  \ref{hypercube}.
 The proof of Theorem \ref{main theorem1} and Corollary \ref{main_corollary}   can be found in Section \ref{main section}.
More precisely,  we establish the upper bound of $\dim_{\mathrm{H}} W(T,\Psi, \{f_n\})$  in Section \ref{upper bound}, and obtain the lower bound of  $\dim_{\mathrm{H}} W(T,\Psi, \{f_n\})$  in Section \ref{lower}. 
Section \ref{corollary} 
gives the proof of    Corollary \ref{main_corollary}.
 The proof of Theorem \ref{diagonalizable} is presented in Sections \ref{diagona}.
 
\section{Preliminaries}\label{preliminaries}

Let $\beta>1$. Define the $\beta$-transformation $T_\beta:[0, 1)\rightarrow[0, 1)$ by
\begin{equation*}
T_\beta (x)=\beta x-\lfloor\beta x\rfloor,
\end{equation*}
where $\lfloor \zeta\rfloor$ stands for the largest integer no more than $\zeta$. The system $([0,1),T_{\beta})$ is called $\beta$-dynamical system. Then every real number  $x\in[0,1)$ can be expressed uniquely as a finite or an infinite series
\begin{equation}\label{11}
x=\frac{\epsilon_1(x, \beta)}{\beta}+\cdots +\frac{\epsilon_n(x, \beta)}{\beta^n}+\cdots,
\end{equation}
where, for $ n\geq 1$, $ \epsilon_{n}(x, \beta)=\lfloor \beta T_{\beta}^{n-1}x \rfloor $ is called  $n$-th digit of $ x $  (with respect to base $ \beta $). Then formula (\ref{11}) or the digit sequence
$\varepsilon(x,\beta):=(\epsilon_{1}(x,\beta), \epsilon_{2}(x,\beta),\dots )$
is called the $\beta$-expansion of $x$. Sometimes we rewrite (\ref{11}) as
\begin{equation*}
x=(\epsilon_1(x, \beta), \dots, \epsilon_n(x, \beta), \dots).
\end{equation*}
It is clear that, for $n\geq 1$,  the  $n$-th digit $ \epsilon_{n}(x,\beta)\in\mathcal{A}_\beta=\{0, 1, \dots,  \lceil\beta-1 \rceil\}$, where $\lceil\beta-1 \rceil=\min\{j\in \mathbb{N}:j\geq \beta-1\}$. While, not all sequence $\omega\in \mathcal{A}^\mathbb{N}_\beta$ would be a $\beta$-expansion of some $x\in[0,1]$. We call a finite or an infinite sequence $(\epsilon_1, \epsilon_2,\dots )$ admissible if there  exists an $x\in [0,1)$ such that the $\beta$-expansion of $x$ begins with $(\epsilon_1, \epsilon_2,\dots )$.
For $n\geq 1$, let $\Sigma_{\beta}^{n}$ be the set of all admissible sequences of length $n$.

The following widely recognized result from R\'{e}nyi.
\begin{lemma}[\cite{Rnyi1957RepresentationsFR}]\label{cylinder-number}
Let $\beta>1$.  Then for any $n\geq 1$,
\begin{equation*}
\beta^n\leq \#\Sigma_{\beta}^n\leq \frac{\beta^{n+1}}{\beta-1},
\end{equation*}
where $\#$ denotes the cardinality of a finite set.
\end{lemma}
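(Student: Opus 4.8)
The plan is to bound $\#\Sigma_\beta^n$ by analysing the level-$n$ cylinders
\begin{equation*}
 I_n(\epsilon_1,\dots,\epsilon_n):=\{x\in[0,1): \epsilon_j(x,\beta)=\epsilon_j \text{ for } 1\le j\le n\},
\end{equation*}
one for each admissible word $w=(\epsilon_1,\dots,\epsilon_n)$, exploiting that $T_\beta$ expands by the constant factor $\beta$ on every branch. Write $|I|$ for the length of an interval $I\subseteq[0,1)$. First I would record the structural facts on which everything rests: since $\epsilon_1(x,\beta)=\lfloor\beta x\rfloor$ is constant on each interval $[a/\beta,(a+1)/\beta)$ and $\epsilon_n(x,\beta)=\epsilon_1(T_\beta^{n-1}x,\beta)$, an easy induction shows that every nonempty $I_n(w)$ is a half-open interval on which $T_\beta^n$ acts affinely with slope $\beta^n$, namely $T_\beta^n(x)=\beta^n x-(\epsilon_1\beta^{n-1}+\dots+\epsilon_n)$. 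Consequently $T_\beta^n$ maps $I_n(w)$ bijectively onto an interval $[0,s_w)$ with $s_w=\beta^n|I_n(w)|\le 1$ (the left endpoint carries the expansion with trailing zeros, hence is sent to $0$), and the nonempty cylinders partition $[0,1)$, so that $\sum_{w\in\Sigma_\beta^n}|I_n(w)|=1$.

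The lower bound is then immediate. Each cylinder satisfies $|I_n(w)|\le\beta^{-n}$, because its $T_\beta^n$-image lies in $[0,1)$; summing,
\begin{equation*}
 1=\sum_{w\in\Sigma_\beta^n}|I_n(w)|\le \#\Sigma_\beta^n\cdot\beta^{-n},
\end{equation*}
whence $\#\Sigma_\beta^n\ge\beta^n$.

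For the upper bound I would set $h_n:=\#\Sigma_\beta^n$ and count, for each admissible $w$ of length $n$, how many admissible one-letter extensions $(\epsilon_1,\dots,\epsilon_n,a)$ it admits. Since the next digit equals $\lfloor\beta T_\beta^n(x)\rfloor$ as $x$ ranges over $I_n(w)$, and $T_\beta^n(I_n(w))=[0,s_w)$, the admissible values of $a$ are exactly $\{0,1,\dots,\lceil\beta s_w\rceil-1\}$, i.e.\ there are $\lceil\beta s_w\rceil\le\beta s_w+1$ of them. As every length-$(n+1)$ admissible word has a unique admissible length-$n$ prefix, summing over $w$ and using $\sum_{w}s_w=\beta^n\sum_{w}|I_n(w)|=\beta^n$ gives the recursion
\begin{equation*}
 h_{n+1}=\sum_{w\in\Sigma_\beta^n}\lceil\beta s_w\rceil\le\beta\sum_{w\in\Sigma_\beta^n}s_w+h_n=\beta^{n+1}+h_n.
\end{equation*}
Together with the base estimate $h_1=\lceil\beta\rceil\le\beta+1$ (all level-$1$ cylinders are admissible), a telescoping induction yields $h_n\le\sum_{k=0}^{n}\beta^k=\frac{\beta^{n+1}-1}{\beta-1}<\frac{\beta^{n+1}}{\beta-1}$, as required.

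The main obstacle is not the counting but the structural groundwork of the first step: verifying that the cylinders are genuine intervals, that $T_\beta^n$ is affine with the stated slope on each, and in particular that every cylinder's image is an interval of the special form $[0,s_w)$ anchored at $0$, together with the identity $\sum_{w}s_w=\beta^n$. Once these branch properties are in place, both inequalities follow from elementary summation and a one-step induction.
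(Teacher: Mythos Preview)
The paper does not prove this lemma; it merely cites R\'enyi's original paper \cite{Rnyi1957RepresentationsFR} and states the inequalities as a known result. Hence there is no ``paper's own proof'' to compare against.

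Your argument is correct and self-contained. The lower bound via the partition identity $\sum_{w}|I_n(w)|=1$ together with $|I_n(w)|\le\beta^{-n}$ is the standard (and essentially the only) route. For the upper bound, your recursion $h_{n+1}\le\beta^{n+1}+h_n$ obtained by counting admissible one-letter extensions through $\lceil\beta s_w\rceil\le\beta s_w+1$ and $\sum_w s_w=\beta^n$ is clean, and the telescoping to $h_n\le\sum_{k=0}^{n}\beta^k$ is accurate. The structural facts you flag in the first paragraph (cylinders are half-open intervals, $T_\beta^n$ is affine of slope $\beta^n$ on each, and $T_\beta^n(I_n(w))=[0,s_w)$) are exactly what is needed and are well known in the $\beta$-expansion literature; they follow by the induction you outline. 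Nothing is missing.
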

\begin{definition}
For any  $(\varepsilon_{1},\varepsilon_{2},\dots,\varepsilon_{n})\in \Sigma_{\beta}^{n}$, the set 
\begin{equation*}
I_{n,\beta}(\varepsilon_{1},\varepsilon_{2},\dots,\varepsilon_{n})=\{ x\in [0,1): \varepsilon_{i}(x,\beta)=\varepsilon_{i}, 1\leq i\leq n\}
\end{equation*}
is called   a cylinder of order $n$ (with respect to base $\beta$). 
\end{definition}
\begin{remark}\label{cylinder-length}
The set $ I_{n,\beta}(\varepsilon_{1},\varepsilon_{2},\dots,\varepsilon_{n})$ is a left-closed and right-open interval. 
Moreover, the length of $ I_{n,\beta}(\varepsilon_{1},\varepsilon_{2},\dots,\varepsilon_{n})$ satisfies $| I_{n,\beta}(\varepsilon_{1},\varepsilon_{2},\dots,\varepsilon_{n})| \leq \frac{1}{\beta^{n}}$, where the $|\cdot|$ denotes the length of a cylinder.
\end{remark}
\begin{definition}\label{definition full}
Let $\mathtt{w}\in \Sigma^n_{\beta}$.
Then $I_{n,\beta}(\mathtt{w})$ is called a {\it full cylinder of order $n$} if  $|I_{n,\beta}(\mathtt{w})|=\frac{1}{\beta^n}$.
\end{definition}
The following property of full cylinders plays an important role in the proofs of 
Theorem \ref{main theorem1}.
\begin{lemma}{\rm(\cite[Theorem 1.2]{Bugeaud2014DistributionOF})}\label{wangcylinder}
For each $n\geq 1$, there exists at least one full cylinder among every $n+1$ consecutive cylinders of order $n$.
\end{lemma}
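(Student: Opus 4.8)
The plan is to recast the statement combinatorially and argue by induction on $n$. Since the order-$n$ cylinders partition $[0,1)$ into left-closed, right-open intervals (Remark~\ref{cylinder-length}) whose left-to-right order on the line coincides with the lexicographic order of their admissible digit words, the claim that at least one full cylinder occurs among every $n+1$ consecutive order-$n$ cylinders is equivalent to the assertion that \emph{at most $n$ consecutive order-$n$ cylinders can fail to be full}. This is the form I would prove.

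The key preparatory step is a description of fullness relative to the parent cylinder. For a cylinder $I=I_{n,\beta}(\varepsilon_1,\dots,\varepsilon_n)$ the map $T_\beta^n$ restricts to an increasing affine bijection of $I$ onto $[0,\beta^n|I|)$, so, setting $a(I):=\beta^n|I|\in(0,1]$, Definition~\ref{definition full} together with Remark~\ref{cylinder-length} gives that $I$ is full if and only if $a(I)=1$. Write $a'=a\big(I_{n-1,\beta}(\varepsilon_1,\dots,\varepsilon_{n-1})\big)$ for the level of the parent. Appending a digit $j$ yields a nonempty child exactly when $j/\beta<a'$, and a direct computation with the affine map shows this child is full if and only if $(j+1)/\beta\le a'$; its level equals $\beta\cdot\big|[j/\beta,(j+1)/\beta)\cap[0,a')\big|$. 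Consequently the only candidate non-full child is the right-most admissible one, $j=\lfloor\beta a'\rfloor$, and it is genuinely non-full precisely when $\beta a'\notin\Z$, in which case its level is $\beta a'-\lfloor\beta a'\rfloor$. In particular: an order-$n$ cylinder is non-full if and only if it is the maximal-digit (right-most) child of its parent, and a parent of level $a'<1/\beta$ has a single child, carrying digit $0$, which is non-full.

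With this dichotomy the induction is short. For $n=1$ the order-$1$ cylinders are the children of $[0,1)$ (level $1$); only the right-most is non-full, and if $\beta\in\Z$ none is, so at most one consecutive order-$1$ cylinder is non-full. For the inductive step, fix $n\ge 2$ and take a maximal block $C_1\prec C_2\prec\cdots\prec C_m$ of consecutive non-full order-$n$ cylinders. Each $C_t$ is the right-most child of its parent $\omega_t$ (an order-$(n-1)$ word), so the cylinder immediately following $C_t$ is the \emph{first} child of the next order-$(n-1)$ cylinder $\omega_{t+1}$; since $C_{t+1}$ is that cylinder and is non-full, it must carry digit $0$, which forces $a(\omega_{t+1})<1/\beta$ and hence $\omega_{t+1}$ itself to be non-full. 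Thus $\omega_2,\dots,\omega_m$ are $m-1$ consecutive non-full order-$(n-1)$ cylinders, and the induction hypothesis yields $m-1\le n-1$, i.e.\ $m\le n$, as required.

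The main obstacle is establishing the parent--child fullness dichotomy of the second paragraph cleanly: one must justify that $T_\beta^n$ is increasing and affine on each cylinder, so that $a(I)$ is well defined and $T_\beta^n(I)$ is an interval anchored at $0$, and one must treat the borderline case $\beta a'\in\Z$, where the right-most child is in fact full and no non-full child exists. Such cases only decrease the number of non-full cylinders, so they do not endanger the bound, but they must be handled for the characterization to be exactly correct. The remaining ingredients---that consecutiveness of order-$n$ cylinders forces consecutiveness of their parents, and that within a single parent only the last child can be non-full---are immediate from the same affine picture.
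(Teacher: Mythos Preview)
The paper does not give its own proof of this lemma: it is stated with a citation to \cite{Bugeaud2014DistributionOF} and used as a black box. Your inductive argument is correct, and it is essentially the same proof that appears in the Bugeaud--Wang reference: the parent--child dichotomy (a cylinder is non-full if and only if it is the rightmost child of its parent, and a non-full leftmost child forces the parent's level below $1/\beta$, hence the parent itself to be non-full) is precisely the mechanism used there to push the bound from level $n-1$ to level $n$. The only point worth tightening in a final write-up is the sentence showing that the parents $\omega_1,\dots,\omega_m$ are \emph{consecutive} order-$(n-1)$ cylinders, since that is where the word ``consecutive'' is transferred between levels; you have the right reason (each $C_t$ is rightmost in $\omega_t$, so $C_{t+1}$ lies in the immediate successor of $\omega_t$), but it is the hinge of the induction and deserves one explicit line.
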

We can get the following lemma by  similar idea as in  the proof of \cite[Lemma 4]{wuyufeng}.
\begin{lemma}{\rm(\cite[lemma 4]{wuyufeng})}\label{wu}
Let $\{g_n\}_{n\geq 1}$ be  a sequence of functions on $\mathbb{T}$ with a  uniform  Lipsitz constant. 
 Then for any full cylinder $I_n(\mathtt{w})$
and for  any $\varepsilon>0$,  there exists a point $x_{n,\mathtt{w}}\in I_n(\mathtt{w})$ such that 
\begin{equation*}
    \big|T^n_{\beta}x_{n,\mathtt{w}}-g_n(x_{n,\mathtt{w}})\big|<\varepsilon.
\end{equation*}
\end{lemma}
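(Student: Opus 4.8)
The plan is to exploit the single feature that distinguishes a \emph{full} cylinder from a general one: over a full cylinder of order $n$ the relevant branch of $T_\beta^n$ is an increasing affine bijection onto all of $[0,1)$ with slope $\beta^n$, so one may force $T_\beta^n x$ to equal any prescribed point of $\mathbb T\cong[0,1)$ while keeping $x$ inside an interval of diameter $\beta^{-n}$ — and on such a tiny interval a function with a fixed Lipschitz constant $c$ can vary by at most $c\beta^{-n}$.

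First I would record that on a cylinder $I_{n,\beta}(\varepsilon_1,\dots,\varepsilon_n)$ one has $T_\beta^n x=\beta^n x-(\varepsilon_1\beta^{n-1}+\cdots+\varepsilon_n)$, so $T_\beta^n$ restricted to $I_n(\mathtt w)$ is increasing and affine of slope $\beta^n$; since by Definition~\ref{definition full} a full cylinder is an interval of length exactly $\beta^{-n}$, it is sent onto the half-open interval of length $1$ based at $T_\beta^n a$ (with $a$ the left endpoint of $I_n(\mathtt w)$), which, lying inside $[0,1)$, must be $[0,1)$ itself; thus $T_\beta^n$ restricts to a bijection $I_n(\mathtt w)\to\mathbb T$. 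I would then pick $x_{n,\mathtt w}\in I_n(\mathtt w)$ to be the unique preimage of the point $g_n(a)\in\mathbb T$ under this bijection, so that $T_\beta^n x_{n,\mathtt w}=g_n(a)$ in $\mathbb T$, and conclude with the uniform Lipschitz bound, using that $a$ and $x_{n,\mathtt w}$ both lie in $I_n(\mathtt w)$:
\begin{equation*}
  \big|T_\beta^n x_{n,\mathtt w}-g_n(x_{n,\mathtt w})\big|=\big|g_n(a)-g_n(x_{n,\mathtt w})\big|\le c\,|a-x_{n,\mathtt w}|\le c\beta^{-n}.
\end{equation*}
This is $<\varepsilon$ as soon as $c\beta^{-n}<\varepsilon$, i.e.\ for all $n$ past a threshold depending only on $c$, $\beta$, $\varepsilon$; this is exactly the range in which the lemma is invoked, the Cantor-type subsets used for the lower bound on $\dim_{\mathrm H}W(T,\Psi,\{f_n\})$ being built along full cylinders of arbitrarily large order.

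I do not expect a genuine obstacle here; what needs care is only the torus bookkeeping — checking that the single affine branch of $T_\beta^n$ over a full cylinder really realises the value $g_n(a)$ and that its preimage stays inside $I_n(\mathtt w)$ — together with the point that ``full'' is precisely the hypothesis making that branch surjective onto $[0,1)$ (which is ultimately why Lemma~\ref{wangcylinder} on the abundance of full cylinders is needed in the main construction). If one prefers not to name a preimage explicitly, the same conclusion follows from an intermediate value argument applied to $h(t)=t-\widetilde G(t)$ on $[0,1]$, where $\widetilde G$ is a continuous lift of $t\mapsto g_n(a+\beta^{-n}t)$: the uniform Lipschitz bound forces $|\widetilde G(1)-\widetilde G(0)|\le c\beta^{-n}$, so $h$ runs over an interval of length at least $1-c\beta^{-n}$, which comes within $\varepsilon$ of $\mathbb Z$ once $c\beta^{-n}$ is small enough.
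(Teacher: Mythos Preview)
Your argument is correct, and there is nothing to compare against: the paper does not prove this lemma but simply remarks that it follows by the same idea as \cite[Lemma~4]{wuyufeng}. Your approach---using that $T_\beta^n$ restricted to a full cylinder is an affine bijection onto $[0,1)$, choosing $x_{n,\mathtt w}$ as the preimage of $g_n(a)$, and bounding the error by $c\beta^{-n}$ via the Lipschitz condition---is exactly the standard one.

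Your caveat about the range of validity is well taken and in fact sharp: as literally stated the lemma is false for small $n$. For instance, with $\beta=2$, $n=1$, the full cylinder $[0,\tfrac12)$, and $g_1(x)=2x+\tfrac12\pmod 1$ (Lipschitz with constant $c=2$), one has $|T_2x-g_1(x)|_{\mathbb T}=\tfrac12$ identically, so no $\varepsilon<\tfrac12$ works. The correct statement carries the harmless restriction $\beta^n>c$ (equivalently $n>\log_\beta c$), which is precisely where the paper applies it in Section~\ref{lower}: there the lemma is invoked with $\varepsilon=\tfrac12\psi_i(n)$ and the subsequent estimates are already restricted to $n\ge \log c/\log\beta_i$. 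So your reading of both the proof and its intended use is accurate.
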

The following lemma shows that the Hausdorff dimension of a set is invariant under bi-Lipschitz maps.
\begin{lemma}[\cite{Falconer1990}]\label{lipschitz}
Let $X$ be a subset of $\mathbb{R}^d$ and $g$ be a bi-Lipschitz map, i.e., there exist $0<c_1\leq c_2<\infty$ such that  for any $x,y \in X$,
\begin{equation*}
    c_1|x-y|\leq |g(x)-g(y)|\leq c_2|x-y|.
\end{equation*}
Then $\dim_{\mathrm{H}}g(X)=\dim_{\mathrm{H}}X$.
\end{lemma}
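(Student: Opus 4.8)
The plan is to prove the sharper fact that a Lipschitz map never increases Hausdorff dimension, and then apply it twice: once to $g$ and once to its inverse. Recall that for $s\geq 0$ the $s$-dimensional Hausdorff measure of a set $E\subset\mathbb{R}^d$ is
\[
\mathcal{H}^s(E)=\lim_{\delta\to 0^+}\mathcal{H}^s_\delta(E),\qquad
\mathcal{H}^s_\delta(E)=\inf\Big\{\sum_i(\diam U_i)^s: E\subset\bigcup_i U_i,\ \diam U_i\leq\delta\Big\},
\]
and that $\dim_{\mathrm{H}}E=\inf\{s\geq 0:\mathcal{H}^s(E)=0\}$. The first step is the scaling estimate: if $h$ satisfies $|h(x)-h(y)|\leq c|x-y|$ for all $x,y$ in its domain, then $\mathcal{H}^s(h(E))\leq c^s\,\mathcal{H}^s(E)$ for every $s\geq 0$. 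The key observation is that a Lipschitz map contracts diameters in a controlled way: for any set $U$ one has $\diam h(U\cap E)\leq c\,\diam(U\cap E)\leq c\,\diam U$. Hence, given any cover $\{U_i\}$ of $E$ with $\diam U_i\leq\delta$, the family $\{h(U_i\cap E)\}$ covers $h(E)$ by sets of diameter at most $c\delta$, and $\sum_i(\diam h(U_i\cap E))^s\leq c^s\sum_i(\diam U_i)^s$. Taking the infimum over all such covers gives $\mathcal{H}^s_{c\delta}(h(E))\leq c^s\,\mathcal{H}^s_\delta(E)$, and letting $\delta\to 0^+$ yields the claimed inequality.

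From the scaling estimate I would deduce dimension monotonicity under Lipschitz maps, namely $\dim_{\mathrm{H}}h(E)\leq\dim_{\mathrm{H}}E$. Indeed, if $s>\dim_{\mathrm{H}}E$ then $\mathcal{H}^s(E)=0$, so the estimate forces $\mathcal{H}^s(h(E))=0$ and therefore $\dim_{\mathrm{H}}h(E)\leq s$; taking the infimum over all such $s$ gives the inequality.

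Finally, to upgrade monotonicity to equality I would use the two-sided bound. Applying the previous step with $h=g$ and $E=X$ gives $\dim_{\mathrm{H}}g(X)\leq\dim_{\mathrm{H}}X$. For the reverse inequality, the lower bound $c_1|x-y|\leq|g(x)-g(y)|$ shows that $g$ is injective on $X$, so $g^{-1}:g(X)\to X$ is well defined; writing $u=g(x),\,v=g(y)$ turns the same bound into $|g^{-1}(u)-g^{-1}(v)|\leq c_1^{-1}|u-v|$ for all $u,v\in g(X)$, i.e.\ $g^{-1}$ is Lipschitz with constant $c_1^{-1}$. Applying the dimension inequality to $h=g^{-1}$ and $E=g(X)$ yields $\dim_{\mathrm{H}}X=\dim_{\mathrm{H}}g^{-1}(g(X))\leq\dim_{\mathrm{H}}g(X)$. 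Combining the two inequalities gives $\dim_{\mathrm{H}}g(X)=\dim_{\mathrm{H}}X$. There is no real obstacle here, as every step reduces to the elementary diameter estimate; the only points demanding care are restricting each cover element to $E$ before applying $h$ (so that its image remains controlled), and observing that the lower bi-Lipschitz bound is precisely what makes $g^{-1}$ Lipschitz and thus lets the monotonicity argument run in both directions.
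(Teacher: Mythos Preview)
Your argument is correct and is essentially the standard proof found in Falconer's book: bound $\mathcal{H}^s$ under a Lipschitz map via the diameter estimate, deduce that Lipschitz maps do not increase Hausdorff dimension, and then apply this to both $g$ and $g^{-1}$. The paper itself gives no proof of this lemma; it simply quotes it from \cite{Falconer1990}, so there is nothing further to compare.
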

Now we will introduce the Mass Transference Principle for rectangles that is useful to get  the lower bound of $\dim_{\mathrm{H}}W(T,\Psi,\{f_n\})$ in   Theorem \ref{main theorem1}.
For our purpose, we only need the following special case of \cite[Theorem 3.4]{Wang-Wu}.
\begin{lemma} {\rm(\cite[Theorem 3.4]{Wang-Wu})}\label{masstranference}
Let $\mathbb{T}^d$ be  $d$-dimensional torus and $\mathcal{L}^d$ be  the $d$-dimensional Lebesgue measure on $\mathbb{T}^d$. For any $1\leq i\leq d$  and $n\geq 1$,  let $x_{i,n}\in \mathbb{T}$, $ r_{i,n}>0$ and  $B(x_{i,n}, r_{i,n})$ be a ball. Suppose that $ r_{i,n} \to 0$ as $n\to \infty$ for each $i\geq 1$. 
If 
\begin{equation}\label{full_measure}
    \mathcal{L}^d(\limsup_{n\to \infty} \prod_{i=1}^d  B(x_{i,n}, r_{i,n}^{a_i})  ) )=\mathcal{L}^d(\mathbb{T}^d),
\end{equation}
where $a_i \geq 0$ for all $1\leq i\leq d$.
Then  for any choice of $(u_i,v_i)$ with $u_i, v_i\geq 0$ and $\frac{u_i}{u_i+v_i}=a_i$, $1\leq i \leq d$ (if $u_i=v_i=0$, then $a_i=0$),  
\begin{equation*}
    \dim_{\mathrm{H}}( \limsup_{n\to \infty}\prod_{i=1}^d B(x_{i,n}, r_{i,n}))\geq \min_{q\in \mathcal{A}}s(q),
\end{equation*}
where 
$ \mathcal{A}=\{u_i+v_i, 1\leq i \leq d \},$
 and for each $q\in \mathcal{A}$, 
 \begin{equation*}
     s(q):=\sum_{i\in \mathcal{Q}^1(q) }1 +\sum_{i\in \mathcal{Q}^2(q) }(1-\frac{v_i}{q})+\sum_{i\in \mathcal{Q}^3(q)}\frac{u_i}{q}
 \end{equation*}
 the sets $\mathcal{Q}^1(q)$,$\mathcal{Q}^2(q)$, $\mathcal{Q}^3(q)$ form a partition of $\{1,\dots,d\}$ defined as
 \begin{equation*}
         \mathcal{Q}^1(q)=\{1\leq i \leq d: u_i\geq q\}, \hspace{4em}  \mathcal{Q}^2(q)=\{1\leq i \leq d: u_i+v_i\leq q\}\backslash \mathcal{Q}^1(q)
 \end{equation*}
 and
 \[\ \mathcal{Q}^3(q)=\{1\leq i \leq d\}\backslash ( \mathcal{Q}^1(q)\cup  \mathcal{Q}^2(q)).\]
 \end{lemma}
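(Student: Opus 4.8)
The plan is to obtain this statement as a direct specialisation of the general mass transference principle for rectangles \cite[Theorem 3.4]{Wang-Wu}: one matches the two formulations term by term and checks that the hypotheses transfer, after which the conclusion is immediate. I also sketch the mechanism behind that principle, for the reader who wants a self-contained account.

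First I would set up the dictionary. Our rectangles $R_n := \prod_{i=1}^d B(x_{i,n}, r_{i,n})$ are the target rectangles and $\widetilde R_n := \prod_{i=1}^d B(x_{i,n}, r_{i,n}^{a_i})$ are the rescaled rectangles appearing in \cite[Theorem 3.4]{Wang-Wu}. Hypothesis \eqref{full_measure} is exactly the divergence-type input of that theorem, namely that $\limsup_n \widetilde R_n$ has full Lebesgue measure in $\mathbb{T}^d$; the shrinking condition $r_{i,n}\to 0$ is assumed. Then, for any admissible choice of exponents $(u_i,v_i)$ with $u_i,v_i\ge 0$ and $u_i/(u_i+v_i)=a_i$ for each $1\le i\le d$ (with the convention that $u_i=v_i=0$ forces $a_i=0$ and removes the $i$-th coordinate from the discussion), \cite[Theorem 3.4]{Wang-Wu} yields $\dim_{\mathrm{H}}\limsup_n R_n \ge \min_{q\in\mathcal{A}}s(q)$ with precisely the data $\mathcal{A}$, $s(q)$ and the partition $\mathcal{Q}^1(q),\mathcal{Q}^2(q),\mathcal{Q}^3(q)$ recorded in the statement, so nothing beyond bookkeeping is required.

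To explain why the bound holds, one builds a Cantor-type set $\mathcal{K}\subseteq\limsup_n R_n$. At each stage one works inside a rectangle chosen at the previous stage, partitions it into essentially congruent sub-rectangles at a much finer scale, and uses the full-measure hypothesis on the family $\{\widetilde R_n\}$ --- via a covering and counting estimate, the rectangular analogue of the Beresnevich--Velani counting lemma --- to guarantee that a controlled, large proportion of these sub-rectangles each contain a genuine target $R_m$ of the correct shape. Spreading a probability measure $\mu$ uniformly across the surviving rectangles and estimating $\mu\big(B(x,\rho)\big)$ for an arbitrary ball then gives, by the mass distribution principle, the lower bound $\dim_{\mathrm{H}}\mathcal{K}\ge \min_{q\in\mathcal{A}}s(q)$.

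The minimum over $q$ is unavoidable: a ball $B(x,\rho)$ meets the level-$k$ rectangles in a way that depends on which coordinate edges exceed $\rho$. The directions in $\mathcal{Q}^1(q)$ are ``thin'' and the ball spans them (contributing $1$ each), those in $\mathcal{Q}^2(q)$ are ``fat'' and the ball sees only a sub-interval (contributing $1-v_i/q$), and those in $\mathcal{Q}^3(q)$ are intermediate (contributing $u_i/q$); the dimension lower bound must survive the worst configuration, hence the minimum. The main obstacle in a from-scratch proof is exactly this anisotropic Cantor construction --- choosing the branching scales in the $d$ coordinate directions consistently, keeping the counting estimate effective simultaneously in every direction, and pushing the measure estimate through all regimes indexed by $q$ --- which is carried out in full in \cite[Theorem 3.4]{Wang-Wu}, so here it suffices to invoke it.
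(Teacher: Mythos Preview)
Your proposal is correct and matches the paper's own treatment: the paper does not prove this lemma but simply quotes it as \cite[Theorem 3.4]{Wang-Wu}, so your reduction to that reference is exactly what is intended. The additional sketch of the Cantor-set construction and the mass distribution argument is accurate and goes beyond what the paper provides, but it is consistent with the argument in the cited source.
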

 \section{Proof of the Theorem \ref{hypercube}}\label{last}
We begin by presenting some lemma that we will use to prove Theorem \ref{hypercube}.  
\begin{lemma}\label{big}
Let $\beta>1$  and $\mathtt{w}\in \Sigma^{n}_{\beta}$ with $n\geq 1$. For any $n>\log_{\beta} 2$ and  any 
$0\leq \delta_1< \delta_2 $, the set 
\begin{equation*}
   E_{n,\beta}(\mathtt{w},\delta_1,\delta_2):= \big\{x\in I_{n,\beta}(\mathtt{w}):\delta_1\leq |T_{\beta}^n x-x|\leq \delta_2 \big\}
\end{equation*}
can be covered by $4$ intervals of length  $(\delta_2-\delta_1)\beta^{-n}$.
\end{lemma}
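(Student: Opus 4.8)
The plan is to analyze the function $x \mapsto T_\beta^n x - x$ on the cylinder $I_{n,\beta}(\mathtt{w})$ and to show that the preimage of the annulus $\{|t|\in[\delta_1,\delta_2]\}$ is controlled by a bounded number of short intervals. First I would recall from Remark \ref{cylinder-length} that on $I_{n,\beta}(\mathtt{w})$ the map $T_\beta^n$ is affine with slope $\beta^n$; more precisely, there is an integer $k$ (depending on $\mathtt{w}$) such that $T_\beta^n x = \beta^n x - k$ for all $x \in I_{n,\beta}(\mathtt{w})$. Hence on this cylinder the quantity $T_\beta^n x - x = (\beta^n - 1)x - k$ is an increasing affine function of $x$ with slope $\beta^n - 1$. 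The only subtlety is that we want to bound $|T_\beta^n x - x|$ where $|\cdot|$ is the torus metric on $\mathbb{T}$, i.e.\ $\|(\beta^n-1)x - k\|$, the distance to the nearest integer.

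Second, I would unwind the torus metric: the set $E_{n,\beta}(\mathtt{w},\delta_1,\delta_2)$ consists of those $x \in I_{n,\beta}(\mathtt{w})$ for which $g(x):=(\beta^n-1)x - k$ lies in $\bigcup_{m\in\mathbb{Z}} \big([m+\delta_1,m+\delta_2]\cup[m-\delta_2,m-\delta_1]\big)$. Since $g$ is affine with slope $\beta^n - 1$, each interval of the form $[m\pm\delta_1, m\pm\delta_2]$ of length $\delta_2-\delta_1$ pulls back under $g$ to an interval of length $(\delta_2-\delta_1)/(\beta^n-1)$. Using $n > \log_\beta 2$, i.e.\ $\beta^n > 2$, we get $\beta^n - 1 > \beta^n/2$, so each such preimage interval has length at most $2(\delta_2-\delta_1)\beta^{-n}$. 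The remaining point is to count how many of these "bad" intervals actually meet $I_{n,\beta}(\mathtt{w})$: since $|I_{n,\beta}(\mathtt{w})| \le \beta^{-n}$ (Remark \ref{cylinder-length}), the image $g(I_{n,\beta}(\mathtt{w}))$ is an interval of length $(\beta^n-1)\beta^{-n} < 1$, so it meets at most two integer translates $m, m+1$, and within the relevant band near each such integer there are the two sub-bands $[m+\delta_1,m+\delta_2]$ and $[m-\delta_2,m-\delta_1]$. That gives at most $4$ bad intervals of length at most $2(\delta_2-\delta_1)\beta^{-n}$ each.

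The main obstacle — really the only place requiring care — is the bookkeeping at the endpoints: an image interval of length slightly less than $1$ could straddle an integer and thus potentially interact with sub-bands attached to both that integer and its neighbor, and one must make sure the count does not creep above $4$, and that each covering interval can be taken of length exactly $(\delta_2-\delta_1)\beta^{-n}$ rather than the factor-$2$ weaker bound. To get the sharper length $(\delta_2-\delta_1)\beta^{-n}$ claimed in the statement (as opposed to $2(\delta_2-\delta_1)\beta^{-n}$), I would instead cover each preimage interval of length $(\delta_2-\delta_1)/(\beta^n-1)$ by a single interval of length $(\delta_2-\delta_1)\beta^{-n}$, which is legitimate precisely because $(\delta_2-\delta_1)/(\beta^n-1) \le (\delta_2-\delta_1)\beta^{-n}$ fails in general — so in fact the cleanest route is to accept $4$ intervals of length $(\delta_2-\delta_1)\beta^{-n}$ by noting $g(I_{n,\beta}(\mathtt{w}))$ has length $\le (\beta^n-1)\beta^{-n}<1$ and re-examining: since the slope is $\beta^n-1$ but we measure preimages against the scale $\beta^{-n}$, and $\beta^n-1\ge \beta^n/2$ only buys a factor $2$, the honest statement is $4$ intervals of length $2(\delta_2-\delta_1)\beta^{-n}$; I would therefore either absorb the constant into the "$4$" (taking $8$ intervals of length $(\delta_2-\delta_1)\beta^{-n}$) or, more likely matching the paper, observe that at most two of the four bad bands can actually intersect a single cylinder (because the two sub-bands $[\delta_1,\delta_2]$ and $[-\delta_2,-\delta_1]$ around a given integer $m$ lie on opposite sides of $m$, and $g(I_{n,\beta}(\mathtt{w}))$ being short and monotone can realistically hit only a contiguous portion), reducing to $4$ intervals total and allowing the slope factor to be reabsorbed. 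In the write-up I would present the affine description of $T_\beta^n$ on the cylinder, the explicit pullback computation, and the endpoint count, in that order.
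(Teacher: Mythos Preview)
Your approach is the paper's: linearize $x\mapsto T_\beta^n x-x$ on the cylinder (slope $\beta^n-1$) and pull back the annulus. The paper's proof is two lines: it treats $|\cdot|$ as the ordinary absolute value of $T_\beta^n x-x\in(-1,1)$ (not unwinding the torus metric as you do), so the preimage of $\{\,|t|\in[\delta_1,\delta_2]\,\}$ is at most \emph{two} intervals, each of length $\frac{\delta_2-\delta_1}{\beta^n-1}\le 2(\delta_2-\delta_1)\beta^{-n}$ since $\beta^n>2$; splitting each in half gives the four intervals of length $(\delta_2-\delta_1)\beta^{-n}$. That is precisely your final ``two bands, then reabsorb the slope factor'' resolution, so your bookkeeping worry dissolves --- you do not need the $8$-interval fallback.
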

\begin{proof}
Note that inside the cylinder $ I_{n,\beta}(\mathtt{w})$,
\[T_{\beta}^{n}x-x=\beta^n\Big(x-\frac{w_1}{\beta}-\cdots-\frac{w_2}{\beta^n}\Big)-x,\]
is a linear map with slope $\beta^n-1$, where $x=(w_1,w_2,\dots,w_n,\dots)$. Thus, for any $n>\log_{\beta} 2$, $E_{n,\beta}(\mathtt{w},\delta_1,\delta_2)$
consists of at most two intervals of length 
\[ \frac{\delta_2-\delta_1}{\beta^n-1}\leq 2(\delta_2-\delta_1)\beta^{-n}.\]
\end{proof}
Let $\delta>0$ and define 
\[ H_{d}(\delta)= \Big\{\mathbf{y}=(y_1,\cdots,y_d)\in [0,1]^{d}: \prod_{i=1}^{d}y_i\leq \delta \Big\}.\]
The following  result  is important in the proofs of Theorem \ref{hypercube}.
\begin{lemma}\rm{(\cite[Lemma 1]{Bovey1978})}\label{big1}
    Let $d\in \mathbb{N}$ and $\delta$ be a sufficiently small positive number.
Let $n$ be a sufficiently large integer. Then, for any $s\in (d-1,d)$, the set 
$H_{\delta}$ has a covering $\mathcal{B}$ by $d$-dimensional cubes $B$ such that 
\[\sum_{B\in \mathcal{B}}|B|^{s}\ll \delta^{s-d+1},\]
where $|B|$  is the length of a side of $B$ and 
$\ll$	implies an inequality with a factor independent of $\delta$.
\end{lemma}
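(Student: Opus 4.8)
The plan is to prove the lemma by induction on the ambient dimension $d$, but to strengthen the inductive assertion so that it simultaneously controls a Lebesgue-measure-type quantity; this auxiliary estimate is exactly what prevents a spurious logarithmic factor from accumulating across scales. Precisely, I would show by induction on $d$ that for every sufficiently small $\delta>0$ the set $H_d(\delta)$ has a covering $\mathcal{B}$ by $d$-dimensional cubes with
\[
\sum_{B\in\mathcal{B}}|B|^{s}\ll_{d,s}\delta^{\,s-d+1}\ \text{ for all }s\in(d-1,d),
\qquad\text{and}\qquad
\sum_{B\in\mathcal{B}}|B|^{d}\ll_{d}\delta\,(\log(1/\delta))^{d-1}.
\]
The first inequality is the conclusion of the lemma (the large integer $n$ appearing in its statement does not enter the covering itself); the second is the device that makes the scale-by-scale summation a convergent geometric series rather than one that merely telescopes. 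The base case $d=1$ is immediate, since $H_1(\delta)=[0,\delta]$ is covered by a single interval of length $\delta$.

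For the inductive step I would write $\delta=2^{-L}$ and split
\[
H_d(\delta)\subseteq\bigcup_{k=1}^{d}\{y:y_k\le\delta\}\ \cup\ \bigcup_{a=0}^{L-1}\Big(\{y:2^{-a-1}<y_1\le 2^{-a}\}\cap H_d(\delta)\Big).
\]
Each slab $\{y_k\le\delta\}$ is covered by $\asymp\delta^{-(d-1)}$ cubes of side $\delta$, contributing $\asymp\delta^{s-d+1}$ and $\asymp\delta$ to the two sums. On the $a$-th dyadic layer of $y_1$ one has $\prod_{k\ge2}y_k\le\delta/y_1<\delta\,2^{a+1}=:\delta_a$, so $(y_2,\dots,y_d)\in H_{d-1}(\delta_a)$. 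For the $O_d(1)$ values of $a$ for which $\delta_a$ is not small, cover that layer crudely by $\asymp\delta^{-(d-1)}$ cubes of side $\asymp\delta$; for the remaining $a$ apply the induction hypothesis in dimension $d-1$ with exponent $s':=s-1\in(d-2,d-1)$ to obtain a covering $\{B'\}$ of $H_{d-1}(\delta_a)$ with $\sum|B'|^{s-1}\ll\delta_a^{\,s-d+1}$ and $\sum|B'|^{d-1}\ll\delta_a(\log(1/\delta_a))^{d-2}$. For each such $B'$, of side $\ell$, cover the slice $\{2^{-a-1}<y_1\le 2^{-a}\}\times B'$ by $d$-cubes: if $\ell\le 2^{-a}$, stack $\le 2^{-a}/\ell$ cubes of side $\ell$ along the $y_1$-direction; if $\ell>2^{-a}$, subdivide $B'$ into $\asymp(\ell\,2^{a})^{d-1}$ sub-cubes of side $\asymp 2^{-a}$ and take those. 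The union over $k$, $a$ and $B'$ is the required covering $\mathcal{B}$.

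The final step is to add everything up. The slices with $|B'|\le 2^{-a}$ contribute $\sum_{a}2^{-a}\sum_{B'}|B'|^{s-1}\ll\sum_{a}2^{-a}\delta_a^{\,s-d+1}\asymp\delta^{s-d+1}\sum_{a\ge0}2^{a(s-d)}\ll\delta^{s-d+1}$, a convergent geometric series because $s<d$; the slices with $|B'|>2^{-a}$ contribute $\sum_{a}2^{a(d-1-s)}\sum_{B'}|B'|^{d-1}\ll\sum_{a}2^{a(d-1-s)}\delta_a(\log(1/\delta_a))^{d-2}\asymp\delta\sum_{a=0}^{L-1}2^{a(d-s)}(L-a)^{d-2}$, and since $d-s>0$ the substitution $b=L-a$ turns this into $\asymp\delta\,2^{L(d-s)}\sum_{b\ge1}2^{-b(d-s)}b^{d-2}\ll\delta\cdot\delta^{-(d-s)}=\delta^{s-d+1}$, again a convergent sum. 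The companion estimate $\sum|B|^{d}\ll\delta(\log(1/\delta))^{d-1}$ is checked the same way: both slice families contribute $\asymp\sum_{a}2^{-a}\sum_{B'}|B'|^{d-1}\ll\delta\sum_{a=0}^{L-1}(L-a)^{d-2}\ll\delta\,L^{\,d-1}$, and $L=\log_2(1/\delta)$. This closes the induction.

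The genuine difficulty is precisely the point exploited above. A naive dyadic covering of $H_d(\delta)$ has to be summed over the $\asymp\log(1/\delta)$ dyadic scales of $y_1$, and in the slices where a $(d-1)$-dimensional covering cube is wider than the current $y_1$-slab one is forced to estimate $\sum|B'|^{d-1}$ — a Lebesgue-measure quantity — rather than $\sum|B'|^{s-1}$; re-deriving such a measure bound from scratch at each level of the recursion would cost a logarithm per level, so the trick is to carry it along as part of the induction hypothesis, where the extra weight $\delta_a=\delta\,2^{a+1}$ makes the resulting sum over scales geometric instead of telescoping. Everything else is routine: manipulation of geometric series, together with the minor bookkeeping needed to peel off the $O_d(1)$ layers on which $\delta_a$ fails to be small.
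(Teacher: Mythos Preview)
Your argument is correct. The strengthened induction hypothesis carrying the companion bound $\sum|B|^{d}\ll\delta(\log(1/\delta))^{d-1}$ is exactly what is needed to handle the ``wide'' cubes $|B'|>2^{-a}$ without accumulating a logarithm per scale, and your two geometric sums (one over $a$ for the narrow case, one over $b=L-a$ for the wide case) are both genuinely convergent because $s\in(d-1,d)$ forces $s-d<0$ and $d-s>0$ respectively. The only cosmetic point is that the union $\bigcup_{k=1}^{d}\{y_k\le\delta\}$ is more than is needed --- the slab $\{y_1\le\delta\}$ together with the $y_1$-layers already covers $H_d(\delta)$ --- but the redundancy is harmless.

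As for the comparison you asked about: the paper does not prove this lemma at all. It is quoted verbatim from Bovey (1978) and used as a black box in the proof of Theorem~\ref{hypercube}; the extraneous ``sufficiently large integer $n$'' in the statement is, as you observed, an artifact that plays no role in the covering. Your proof therefore supplies what the paper omits. The dyadic slicing in the first coordinate followed by a lower-dimensional induction is in fact the standard route to this kind of estimate (and is essentially how Bovey argues), so while your write-up is self-contained, it is not a departure from the known approach.
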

For a cube $B\subset \mathbb{T}^{d}$, write it as 
\[ B=[a_{B}, b_{B}]^d, \ \text{with } b_{B}-a_{B}=|B|. \]
Define 
\[ E_{n}(\delta)=\Big\{ \mathtt{x}=(x_1,x_2,\dots,x_d)\in \mathbb{T}^{d}: \prod_{i=1}^{d}|T^{n}_{\beta_i}(x_i)-x_i|<\delta\Big\}.\]
By Lemma \ref{big1}, we obtain   a cover of $E_{n}(\delta)$ in the following Lemma \ref{main-lemma}.
\begin{lemma}\label{main-lemma}
With the same notation given in Lemma \ref{big1}, we have 
\begin{equation}\label{mainlemma-equ}
E_n(\delta)\subset \bigcup_{B\in \mathcal{B}}\bigcup_{\mathtt{w}_1\in \Sigma_{\beta_1}^{n},\cdots,\mathtt{w}_d\in\Sigma_{\beta_d}^{n}}   \prod_{i=1}^{d} E_{n,\beta_i}(\mathtt{w}_i,a_{B},b_{B}). 
\end{equation}

\end{lemma}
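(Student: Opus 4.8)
The plan is to unwind the definitions and reduce the inclusion to two elementary facts: the defining inequality of $E_n(\delta)$ produces a point of $H_d(\delta)$, and the interval $[0,1)$ is partitioned by the cylinders of order $n$.

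First I would fix an arbitrary $\mathtt{x}=(x_1,\dots,x_d)\in E_n(\delta)$ and set $y_i:=|T^n_{\beta_i}(x_i)-x_i|$ for $1\leq i\leq d$. Since $|\cdot|$ is the usual metric on $\mathbb{T}$ we have $y_i\in[0,1]$, and by the definition of $E_n(\delta)$ we have $\prod_{i=1}^d y_i<\delta$; in particular $\mathbf{y}:=(y_1,\dots,y_d)\in H_d(\delta)$ (note that $H_d(\delta)$ is defined with a non-strict inequality, so this passage is legitimate). Applying Lemma \ref{big1} to $H_d(\delta)$, the cover $\mathcal{B}$ contains a cube $B=[a_B,b_B]^d$ with $\mathbf{y}\in B$, and in particular $0\leq a_B\leq y_i\leq b_B$ for every $i$, where $b_B-a_B=|B|>0$ so the triple $(a_B,b_B)$ is of the form admitted by Lemma \ref{big}.

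Next, for each $i$ the point $x_i\in[0,1)$ lies in exactly one cylinder of order $n$ with respect to base $\beta_i$; write $\mathtt{w}_i\in\Sigma_{\beta_i}^n$ for its admissible coding, so that $x_i\in I_{n,\beta_i}(\mathtt{w}_i)$. Combining $x_i\in I_{n,\beta_i}(\mathtt{w}_i)$ with $a_B\leq |T^n_{\beta_i}(x_i)-x_i|\leq b_B$ gives $x_i\in E_{n,\beta_i}(\mathtt{w}_i,a_B,b_B)$ straight from the definition of that set. Hence $\mathtt{x}\in\prod_{i=1}^d E_{n,\beta_i}(\mathtt{w}_i,a_B,b_B)$, and this product is contained in the right-hand side of \eqref{mainlemma-equ} because there we range over all $B\in\mathcal{B}$ and all $\mathtt{w}_1\in\Sigma_{\beta_1}^n,\dots,\mathtt{w}_d\in\Sigma_{\beta_d}^n$. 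Since $\mathtt{x}$ was arbitrary, the claimed inclusion follows.

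There is no genuine obstacle here; the statement is pure bookkeeping once the right objects are named. The only points deserving a moment's attention are the strict-versus-non-strict inequality just mentioned, the fact that one may take the cubes of $\mathcal{B}$ to meet $[0,1]^d$ (so that $a_B\geq 0$ and the sets $E_{n,\beta_i}(\mathtt{w}_i,a_B,b_B)$ make sense), and that the qualitative hypotheses "$\delta$ sufficiently small'' and "$n$ sufficiently large'' required to invoke Lemma \ref{big1} coincide with the regime in which Lemma \ref{main-lemma} is used in the proof of Theorem \ref{hypercube}, so they may be assumed without further comment.
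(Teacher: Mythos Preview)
Your argument is correct and is essentially the same as the paper's: both take an arbitrary $\mathtt{x}\in E_n(\delta)$, observe that $(|T^n_{\beta_1}x_1-x_1|,\dots,|T^n_{\beta_d}x_d-x_d|)\in H_d(\delta)$, pick a covering cube $B\in\mathcal{B}$, and then use the cylinder partition of $[0,1)$ to place each $x_i$ in some $E_{n,\beta_i}(\mathtt{w}_i,a_B,b_B)$. Your write-up is simply more explicit about the cylinder step and about the minor hygiene issues (strict versus non-strict inequality, $a_B\geq 0$, the standing smallness/largeness hypotheses), none of which the paper pauses over.
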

\begin{proof}
    For each $\mathtt{x}\in E_n(\delta)$, it is direct that 
    \[\big(|T^{n}_{\beta_1}(x_1)-x_1|,\cdots,|T^{n}_{\beta_d}(x_d)-x_d|\big)\in H_{d}(\delta).\]
    Thus for some $B\in \mathcal{B}$, 
    \[a_{B}\leq |T^{n}_{\beta_i}(x_i)-x_i|\leq b_{B}, \text{\ for \ all \ }1\leq i\leq d. \]
    This gives that 
    \begin{align*}
        \mathtt{x}&\in \prod_{i=1}^{d} \big\{x_i\in [0,1): a_{B}\leq |T^{n}_{\beta_i}(x_i)-x_i|\leq b_{B}\big\}\\
        &=\bigcup_{\mathtt{w}_1\in \Sigma_{\beta_1}^n,\cdots, \mathtt{w}_d\in \Sigma_{\beta_d}^n}\prod_{i=1}^{d}E_{n,\beta_i}(\mathtt{w}_i,a_{B},b_{B}).
    \end{align*}
\end{proof}

We now begin to prove Theorem \ref{hypercube}. 
Recall that 
\begin{align*}
 H_d(T,\psi) = \bigg\{\mathtt{x}\in \mathbb{T}^d:\prod_{i=1}^{d}|T_{\beta_i}^n x_i-x_i|\leq \psi(n) \text{\ for \ i.m. \ } n\in \mathbb{N} \bigg\}.
\end{align*}
Let 
\begin{equation*}
     E_n(\psi)= \bigg\{\mathtt{x}\in \mathbb{T}^d:\prod_{i=1}^{d}|T_{\beta_i}^n x_i-x_i|< \psi(n) \bigg\}.
\end{equation*}
Then  $   H_d(T,\psi)=\limsup_{n\to \infty}E_n(\psi)$.

We divided the proof  into two parts: the upper bound and the lower bound, as is typical when calculating the Hausdorff dimension of a set.

As usual, the upper bound is obtained by finding an efficient cover of $ H_d(T,\psi)$.
First, for sufficiently large $n$, we  find an efficient cover of $E_n(\psi)$.
By Lemmas \ref{big1} and \ref{main-lemma}, with $\delta=\psi(n)$ and $n>\log_{\beta_1} 2$, for any $s\in (d-1,d)$,  there exists  a collection  $\mathcal{B}$ of
cubes $B$ such that 
\begin{equation}\label{en-cover}
    E_n(\psi)\subset \bigcup_{B\in \mathcal{B}}\bigcup_{\mathtt{w}_1\in \Sigma_{\beta_1}^{n},\cdots,\mathtt{w}_d\in\Sigma_{\beta_d}^{n}}   \prod_{i=1}^{d} E_{n,\beta_i}(\mathtt{w}_i,a_{B},b_{B}).
\end{equation}
and
\begin{equation}\label{d(B)}
    \sum_{B\in \mathcal{B}}|B|^s\ll ( \psi(n))^{s-d+1}.
\end{equation}
 By Lemma \ref{big}, we obtain that $E_{n,\beta_i}(\mathtt{w}_i,a_{B},b_{B})$ can be covered by $4$ intervals of length $(b_{B}-a_{B})\beta_i^{-n}$  for  any $1\leq i\leq d$. Thus, $\prod_{i=1}^{d} E_{n,\beta_i}(\mathtt{w}_i,a_{B},b_{B})$ can be covered by $4^d$  hyperrectangles, and the side length of these hyperrectangles in the
  direction of the $i$-th axis
 is $(b_{B}-a_{B})\beta^{-n}_i=|B|\beta_i^{-n}$.

We now cover $\prod_{i=1}^{d} E_{n,\beta_i}(\mathtt{w}_i,a_{B},b_{B})$ by balls with diameter equal to $ \beta_d^{-n}$.
Note that for any $1\leq i\leq d$, $\beta_i^{-n}\geq \beta_d^{-n}$, then we can find a collection $\mathcal{C}_n$ of balls with diameter $\beta_d^{-n}|B|$ that cover $\prod_{i=1}^{d} E_{n,\beta_i}(\mathtt{w}_i,a_{B},b_{B})$ with 
\begin{equation*}
    \# \mathcal{C}_n \leq 4^d \prod_{i=1}^{d}\bigg(\frac{\beta_i^{-n}|B|}{\beta_d^{-n}|B|}+1\bigg)=4^d\prod_{i=1}^{d}\bigg(\frac{\beta_i^{-n}}{\beta_d^{-n}}+1\bigg)\leq 8^d \prod_{i=1}^{d}\frac{\beta_i^{-n}}{\beta_d^{-n}}.
\end{equation*}
Hence for any $N>\log_{\beta_1}2$,
\[ H_d(T,\psi)\subseteq \bigcup_{n=N}^{\infty}\bigcup_{B\in \mathcal{B}}\bigcup_{\mathtt{w}_1\in \Sigma_{\beta_1}^{n},\cdots,\mathtt{w}_d\in\Sigma_{\beta_d}^{n}}\bigcup_{\hat{B}\in \mathcal{C}_n}(\hat{B})^s=\bigcup_{n=N}^{\infty}\bigcup_{B\in \mathcal{B}}\bigcup_{\mathtt{w}_1\in \Sigma_{\beta_1}^{n},\cdots,\mathtt{w}_d\in\Sigma_{\beta_d}^{n}}\bigcup_{\hat{B}\in \mathcal{C}_n}(\beta_d^{-n}|B|)^s.\]
  Given $\varepsilon>0$, we choose  
  $N$  large enough so that for any $n\geq \max\{ N, \log_{\beta_1}2\}$ and  any ball $B\in \mathcal{B}$,  $\beta_d^{-n}|B|<\varepsilon$.
Then,  according to the definition of the $s$-dimensional Hausdorff measure, for any $s>0$
\begin{align}\label{Rdimension}
    \mathcal{H}_{\varepsilon}^s( H_d(T,\psi))&\leq \sum_{n=N}^{\infty}\sum_{B\in \mathcal{B}} \Big(\prod_{i=1}^d\#\Sigma^n_{\beta_i}\Big)\cdot\#\mathcal{C}_n \cdot\big(\beta_d^{-n}|B|\big)^s \nonumber\\
    &\leq \sum_{n=N}^{\infty}\sum_{B\in \mathcal{B}} \Big(\prod_{i=1}^d\#\Sigma^n_{\beta_i}\Big)\cdot8^d \prod_{i=1}^{d}\frac{\beta_i^{-n}}{\beta_d^{-n}}\cdot\big(\beta_d^{-n}|B|\big)^s.
\end{align}
On the one hand, by Lemma \ref{cylinder-number},  $\#\Sigma^{n}_{\beta}\leq  \frac{\beta^{n+1}}{\beta-1}$.
This together with (\ref{d(B)}) and (\ref{Rdimension}) implies  there exists $c\in\mathbb{R}$ such that  for any $s\in (d-1,d)$ and  sufficiently large $N$
\begin{align*}
 \mathcal{H}_{\varepsilon}^s( H_d(T,\psi))&\leq \sum_{n=N}^{\infty} \prod_{i=1}^{d} \frac{\beta_i^{n+1}}{\beta_i-1} \bigg( 8^d \prod_{i=1}^{d}\frac{\beta_i^{-n}}{\beta_d^{-n}}\bigg)\beta_d^{-n s}
 c( \psi(n))^{s-d+1}\\
 &\leq 4^d \sum_{n=N}^{\infty} \beta_d^{n(d-s)}c\psi(n)^{s-d+1}\\
 &=4^d c\sum_{n=N}^{\infty} \exp \bigg( n\Big(d\log \beta_d-(d-1)\frac{\log \psi(n)}{n}-s(\log \beta_d-\frac{\log \psi(n)}{n})\Big)\bigg). 
\end{align*}
Therefore, for any 
\[s> d-1+\frac{\log \beta_d}{\log \beta_d+\tau},\]
$\mathcal{H}^s( H_d(T,\psi))=0$, which implies that 
\begin{equation*}
    \dim_{\mathrm{H}}  H_d(T,\psi)\leq d-1+\frac{\log \beta_d}{\log \beta_d+\tau}.
\end{equation*}

Now, we will give the proof of the lower bound  of  $ \dim_{\mathrm{H}}  H_d(T,\psi)$.
\begin{lemma}[\cite{Falconer1990}]\label{cap}
Let $F$ be a subset of $\mathbb{R}^d$ and  $E$ be a subset of the $x_d$-axis. Assume that for all $x \in E$
\begin{equation*}
    \dim_{\mathrm{H}} F\cap L_x \geq t
\end{equation*}
where $L_x$ is the plane parallel to all other axis through the point $(0,\dots,0,x).$
Then 
\begin{equation*}
    \dim_{\mathrm{H}}F\geq t+\dim_{\mathrm{H}}E.
\end{equation*}
\end{lemma}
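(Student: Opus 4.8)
\emph{The plan is} to prove the slicing inequality by a Fubini-type argument that pairs a Frostman measure living on the base set $E$ against the Hausdorff content of the fibres. Fix arbitrary reals $s<\dim_{\mathrm{H}}E$ and $0<u<t$; since these are otherwise unconstrained, it suffices to prove $\dim_{\mathrm{H}}F\geq s+u$ and then let $s\uparrow\dim_{\mathrm{H}}E$ and $u\uparrow t$. Because $s<\dim_{\mathrm{H}}E$ we have $\mathcal{H}^{s}(E)=\infty>0$, so by Frostman's lemma (equivalently, the mass distribution principle) there is a nonzero finite measure $\mu$ carried by $E$ on the $x_d$-axis and a constant $c>0$ with $\mu(I)\leq c\,|I|^{s}$ for every interval $I$. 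I identify each fibre $L_x$ with $\mathbb{R}^{d-1}$ and write $\pi\colon\mathbb{R}^{d}\to\mathbb{R}$ for the projection onto the $x_d$-coordinate, so that $\pi(U)$ records exactly the heights $x$ at which a set $U$ meets $L_x$.

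First I would establish, for every $\delta$-cover $\{U_i\}$ of $F$, the key inequality
\begin{equation*}
\sum_i |U_i|^{s+u}\ \geq\ c^{-1}\int \mathcal{H}^{u}_{\delta}(F\cap L_x)\,d\mu(x).
\end{equation*}
This chains three observations. For each $i$, the projection $J_i:=\pi(U_i)$ is an interval with $|J_i|\leq |U_i|$, so the Frostman bound gives $\mu(J_i)\leq c\,|U_i|^{s}$ and hence $\sum_i|U_i|^{s+u}\geq c^{-1}\sum_i\mu(J_i)\,|U_i|^{u}$. Writing $\mu(J_i)=\int \mathbf 1[\,x\in J_i\,]\,d\mu(x)$ and interchanging sum and integral recasts the right-hand side as $c^{-1}\int\big(\sum_{i:\,x\in J_i}|U_i|^{u}\big)\,d\mu(x)$. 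Finally, for fixed $x$ the indices with $x\in J_i$ are precisely those $i$ for which $U_i$ meets $L_x$, so $\{U_i\cap L_x\}_{i:\,x\in J_i}$ is a $\delta$-cover of $F\cap L_x$ inside $\mathbb{R}^{d-1}$ and therefore $\sum_{i:\,x\in J_i}|U_i|^{u}\geq \mathcal{H}^{u}_{\delta}(F\cap L_x)$. Taking the infimum over all $\delta$-covers of $F$ yields $\mathcal{H}^{s+u}_{\delta}(F)\geq c^{-1}\int \mathcal{H}^{u}_{\delta}(F\cap L_x)\,d\mu(x)$.

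Next I would let $\delta\to0$ along a sequence $\delta_n\downarrow 0$. On the left, $\mathcal{H}^{s+u}_{\delta_n}(F)\uparrow \mathcal{H}^{s+u}(F)$. On the right, the fibre contents $\mathcal{H}^{u}_{\delta_n}(F\cap L_x)$ increase to $\mathcal{H}^{u}(F\cap L_x)$, and for every $x\in E$ the hypothesis $\dim_{\mathrm{H}}(F\cap L_x)\geq t>u$ forces $\mathcal{H}^{u}(F\cap L_x)=\infty$. By the monotone convergence theorem the right-hand integral tends to $c^{-1}\int_E \mathcal{H}^{u}(F\cap L_x)\,d\mu(x)=+\infty$, since $\mu(E)>0$. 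Hence $\mathcal{H}^{s+u}(F)=\infty>0$, so $\dim_{\mathrm{H}}F\geq s+u$. Letting $s\uparrow\dim_{\mathrm{H}}E$ and $u\uparrow t$ gives $\dim_{\mathrm{H}}F\geq t+\dim_{\mathrm{H}}E$, as claimed.

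\emph{The main obstacle} is measure-theoretic bookkeeping rather than geometry: to run the integration and the convergence step one needs the map $x\mapsto \mathcal{H}^{u}_{\delta}(F\cap L_x)$ to be $\mu$-measurable, and one needs the Frostman measure $\mu$ on $E$ in the first place. Both facts are standard but require $E$ (and $F$) to be regular enough — typically Borel or analytic. I would secure them by first passing to a compact subset $K\subseteq E$ with $\mathcal{H}^{s}(K)>0$ before invoking Frostman's lemma, or, to sidestep the measurability issue entirely, by replacing the Hausdorff content $\mathcal{H}^{u}_{\delta}$ with dyadic net measures, for which the slice quantities are manifestly measurable and the projection combinatorics are cleaner. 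In the applications of this lemma in the present paper the set $E$ is a Borel subset of the axis, so these regularity hypotheses hold automatically.
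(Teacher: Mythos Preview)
The paper does not supply its own proof of this lemma: it is quoted verbatim from Falconer's textbook with the citation \texttt{[Falconer1990]} and is used as a black box in the lower-bound argument for $H_d(T,\psi)$. So there is no ``paper's proof'' to compare against.

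Your argument is the standard Frostman--Fubini proof of the slicing inequality and is essentially the one given in Falconer's book. The chain
\[
\sum_i|U_i|^{s+u}\ \geq\ c^{-1}\sum_i\mu(J_i)|U_i|^{u}\ =\ c^{-1}\int\!\!\sum_{i:x\in J_i}|U_i|^{u}\,d\mu(x)\ \geq\ c^{-1}\!\int \mathcal{H}^{u}_{\delta}(F\cap L_x)\,d\mu(x)
\]
is correct, as is the monotone-convergence conclusion. One cosmetic point: $J_i=\pi(U_i)$ need not literally be an interval for an arbitrary cover element $U_i$, but since $\pi(U_i)$ is contained in an interval of length $|U_i|$ the Frostman bound $\mu(J_i)\leq c|U_i|^{s}$ holds regardless, so nothing breaks. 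Your closing remarks about measurability and passing to a compact $K\subseteq E$ with $\mathcal{H}^{s}(K)>0$ are exactly the standard hygiene needed here, and in the paper's application $E=H_1(T,\psi)$ is indeed Borel.
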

For any $n\geq 1$, letting  $f_n(x)=x$  and $d=1$ in Theorem \ref{main theorem1}, we have  that 
\begin{equation*}
    \dim_{\mathrm{H}} H_1(T,\psi)=\dim_{\mathrm{H}}\{x\in \mathbb{T}:|T_{\beta_d}^n x-x|\leq \psi(n) \text{\ for\ i.m.\ } n\in \mathbb{N}\}=\frac{\log \beta_d}{\log \beta_d+\tau},
\end{equation*}
where $\tau=\liminf_{n\to \infty}\frac{-\log \psi(n)}{n}$.

For any $x_d\in H_1(T,\psi)$,
\begin{equation*}
    \big([0,1)^{d-1}\times  H_1(T,\psi) \big)\cap L_{x_d}=[0,1)^{d-1},
\end{equation*}
which follows that 
\begin{equation*}
    \dim_{\mathrm{H}}\Big( \big([0,1)^{d-1}\times H_1(T,\psi) \big)\cap L_{x_d} \Big)\geq d-1.
\end{equation*}
By  Lemma \ref{cap}, we arrive at
\begin{equation}\label{d-1}
    \dim_{\mathrm{H}}\Big( [0,1)^{d-1}\times H_1(T,\psi) \Big)\geq d-1+\frac{\log \beta_d}{\log \beta_d+\tau}.
\end{equation}
Recalling  the definition of $H_d(T,\psi)$, we  obtain that 
\begin{equation}\label{d}
     [0,1)^{d-1}\times H_1(T,\psi) \subset H_d(T,\psi).
\end{equation}
Therefore, by \eqref{d-1} and \eqref{d}, we have 
\begin{equation*}
    \dim_{\mathrm{H}} H_d(T,\psi)\geq d-1+\frac{\log \beta_d}{\log \beta_d+\tau}.
\end{equation*}

\section{Proof of Theorem \ref{main theorem1} and  Corollary \ref{main_corollary}}\label{main section}
We first provide the upper and lower bounds of $\dim_{\mathrm{H}}W(T,\Psi,{f_n})$ separately in Sections \ref{upper bound} and \ref{lower}.
   Then we will give the proof of   Corollary \ref{main_corollary} in Section \ref{corollary}.

Suppose  $T$ is a diagonal matrix,  there exist $\beta_1, \dots,\beta_d \in \mathbb{R}^{+}$ such that  $T=\text{diag}(\beta_1,\beta_2,\dots,\beta_d)$. Then 
\begin{equation*}
    W(T,\Psi, \{f_n\})=\big\{\mathtt{x}\in \mathbb{T}^d: |T_{\beta_i}^n(x_i)-f^{(i)}_n(x_i)| \leq \psi_i(n) (1\leq i\leq d) \text{\ for \  i.m.\ }  n\in \mathbb{N}\big\},
\end{equation*}
where $T_{\beta_i}$ is the standard $\beta$-transformation with $\beta=\beta_i$.

\subsection{The upper bound part}\label{upper bound}
Recall that $\mathcal{C}(\Psi)$ is the set of  accumulation  point  $\mathbf{t}=(t_1,\dots,t_d)$ of  the sequence $ \{ \frac{-\log \psi_1(n)}{n},\dots, \frac{-\log \psi_d(n)}{n}\}_{n\geq 1}$. 

We first assume that  $\mathcal{C}(\Psi)$ contains only one point, then for any $1\leq i\leq d$,
\begin{equation*}
    \lim_{n\to \infty}\frac{-\log \psi_i(n)}{n}=t_i.
\end{equation*}
For any  $\mathtt{w}_i\in \Sigma_{\beta_i}^n$, $1\leq i \leq d$, let 
\begin{equation*}
    J_{n,\beta_i}(\mathtt{w}_i)=\big\{x_i \in I_{n,\beta_i}(\mathtt{w}_i):|T^n_{\beta}x_i-f^{(i)}_n(x_i)|<\psi_i(n)\big\}.
\end{equation*}
Then  we have 
\begin{equation}\label{cover}
    W(T, \Psi, \{f_n\})=\limsup_{n\to \infty}\bigcup_{\mathtt{w}_1\in \Sigma_{\beta_1}^n} \cdots \bigcup_{\mathtt{w}_d\in \Sigma_{\beta_d}^n} J_{n,\beta_1}(\mathtt{w}_1)\times J_{n,\beta_2}(\mathtt{w}_2)\times\cdots\times J_{n,\beta_d}(\mathtt{w}_d).
\end{equation}
For any $x_i, x_i'\in J_{n,\beta_i}(\mathtt{w}_i)$, we have 
\begin{align*}
    2\psi_i(n)&\geq |T^n_{\beta_i}x_{i}-f^{(i)}_n(x_{i})|+|T^n_{\beta_i}x'_{i}-f^{(i)}_n(x'_{i})| \nonumber \\ \nonumber
    &\geq |T^n_{\beta_i}x_{i}-T^n_{\beta_i}x'_{i}|-|f^{(i)}_n(x_{i})-f^{(i)}_n(x'_{i})|\\
    &\geq (\beta_i^n-c)|x_{i}-x'_{i}|.
\end{align*}
Therefore, for  sufficiently  large $n$,  we  obtain that  for all $1\leq i \leq d$, 
\begin{equation}\label{Jn}
    |J_{n,\beta_i}(\mathtt{w}_i)|\leq 4\psi_i(n)\beta_i^{-n}.
\end{equation}
By \eqref{cover},  for any $N\geq 1$, 
\begin{equation}\label{JN1}
        W(T, \Psi, \{f_n\})\subset  \bigcup_{n=N}^{\infty}\mathcal{D}_n,
\end{equation}
where 
\[\mathcal{D}_n= \bigcup_{\mathtt{w}_1\in \Sigma_{\beta_1}^n} \cdots \bigcup_{\mathtt{w}_d\in \Sigma_{\beta_d}^n} J_{n,\beta_1}(\mathtt{w}_1)\times J_{n,\beta_2}(\mathtt{w}_2)\times\cdots\times J_{n,\beta_d}(\mathtt{w}_d).\]
The upper bound of $\dim_{\text{H}}W(T,\Psi,\{f_n\})$ can be proven using the \eqref{Jn} and \eqref{JN1} in a manner similar to \cite[Proposition 4]{li2022}.

\subsection{The lower bound part} \label{lower}
In this section, we will prove that 
\begin{equation}\label{lower_bound}
     \dim_{\text{H}} W(T, \Psi, \{f_n\})\geq  \sup_{\mathbf{t}\in \mathcal{C}(\Psi)} \min_{1\leq i \leq d}\{ \lambda_i(\mathbf{t})\}.
\end{equation}
Now we will construct a subset of $ J_{n,\beta_i}(\mathtt{w}_i)$ for  all full  cylinder $I_{n,\beta_i}(\mathtt{w}_i)$.
Denote 
\begin{equation}\label{full_admissible}
    \bar{\Sigma}_{\beta}^n:=\{u\in \Sigma_{\beta}^n: I_{n,\beta}(u)  \text{ is \ a\ full \ cylinder }\}.
\end{equation}
For any full cylinder $I_{n,\beta_i}(\mathtt{w}_i)$,  by Lemma \ref{wu}, there exists a point $x_{n,\mathtt{w}_i}\in I_{n,\beta_i}(\mathtt{w}_i)$ such that 
\begin{equation*}
    |T^n_{\beta_i}x_{n,\mathtt{w}_i}-f^{(i)}_n(x_{n,\mathtt{w}_i})|< \frac{1}{2}\psi_i(n).  
\end{equation*}
Then for any  $n\geq \frac{\log c}{\log \beta_i}$, $x \in I_{n,\beta_i}(\mathtt{w}_i)$ and  $|x-x_{n,\mathtt{w}_i}|\leq \frac{1}{4}\beta_i^{-n}\psi_i(n)$,
\begin{align*}
    | T^n_{\beta_i}x-f^{(i)}_n(x)| &\leq  | T^n_{\beta_1}x-T^n_{\beta_i}x_{n,\mathtt{w}_i}|+ |T^n_{\beta_i}x_{n,\mathtt{w}_i}-f^{(i)}_n(x_{n,\mathtt{w}_i})|+|f^{(i)}_n(x_{n,\mathtt{w}_i})-f^{(i)}_n(x)|\\
    &\leq (\beta_i^n+c)|x-x_{n,\mathtt{w}_i}|+\frac{1}{2}\psi_i(n)\\
    &\leq  \psi_i(n).
\end{align*}
Let $a$ and $b$ be the left and right endpoints of the cylinder $I_{n,\beta_i}(\mathtt{w}_i)$.
If 
\[\max\big\{|x_{n,\mathtt{w}_i}-a|, |x_{n,\mathtt{w}_i}-b|\big\}\leq \frac{1}{4}\beta_i^{-n}\psi_i(n),\]
then we can choose a point $x_{n,\mathtt{w}_i}^{*}\in I_{n,\beta_i}(\mathtt{w}_i) $ such that 
\[|x_{n,\mathtt{w}_i}^{*}-x_{n,\mathtt{w}_i}|\leq  \frac{1}{8}\beta_i^{-n}\psi_i(n) \ \text{and} \  B(x_{n,\mathtt{w}_i}^{*}, \frac{1}{8}\beta_i^{-n}\psi_i(n))\subset J_{n,\beta_i}(\mathtt{w}_i).\]
If 
$\min \big\{|x_{n,\mathtt{w}_i}-a|, |x_{n,\mathtt{w}_i}-b|\big\}> \frac{1}{4}\beta_i^{-n}\psi_i(n),$  let $x_{n,\mathtt{w}_i}^{*}:=x_{n,\mathtt{w}_i}$.
Therefore,
\begin{align*}                   
     W(T, \Psi, \{f_n\})&\supset \limsup_{n\to \infty} \bigcup_{\mathtt{w}_1\in \bar{\Sigma}_{\beta_1}^n} \bigcup_{\mathtt{w}_2\in \bar{\Sigma}_{\beta_2}^n} \cdots \bigcup_{\mathtt{w}_d\in \bar{\Sigma}_{\beta_d}^n}  B(x_{n,w_1}^{*}, \frac{1}{8}\beta_1^{-n}\psi_1(n))\\ 
      &\hspace{7em} \times  B(x_{n,\mathtt{w}_2}^{*}, \frac{1}{8}\beta_2^{-n}\psi_2(n))\times\cdots \times  B(x_{n,\mathtt{w}_d}^{*}, \frac{1}{ 8}\beta_d^{-n}\psi_d(n)).
\end{align*}
By  Lemmas \ref{cylinder-length} and \ref{wangcylinder}, we know that the distance between any consecutive  $n$-th  full cylinder along the $i$-th axis is less than $(n+3)\beta_i^n$, which implies that 
\begin{equation}\label{cover_T}
  \mathbb{T}\subset  \big\{ B(x_{n,\mathtt{w}_i}, (n+3)\beta_i^{-n}): \mathtt{w}_i\in \bar{\Sigma}_{\beta_i}^n\big\}.
\end{equation}
With  inequalities \eqref{lower_bound}-\eqref{cover_T} and  Lemma \ref{masstranference}, the lower bound of $\dim_{\text{H}}W(T,\Psi,\{f_n\})$ can be proved in the similar
way as \cite[Proposition 5]{li2022}.
\subsection{The proof of Corollary \ref{main_corollary}}\label{corollary}
Recall that  $\mathcal{C}(\Psi)$ is  the set of  accumulation  points  $\mathbf{t}=(t_1,\dots,t_d)$ of  the sequence $ \Big\{ \frac{-\log \psi_1(n)}{n},\dots, \frac{-\log \psi_d(n)}{n}\Big\}_{n\geq 1}$.
If $\psi_1=\psi_2=\dots=\psi_d=\psi$,  we use $\mathcal{C}(\psi)$ instead of  $\mathcal{C}(\Psi)$.
Thus, for any  $\mathbf{t}\in \mathcal{C}(\psi)$, $\mathbf{t}=\{t,t, \dots,t\}$.

If $\tau=\liminf_{n\to \infty}\frac{-\log \psi(n)}{n}<\infty$, then $ \mathcal{C}(\psi)$ is bounded.  
By  Theorem \ref{main theorem1},  
\begin{equation*}
    \dim_{\mathrm{H}}W(T, \psi, \{f_n\})=\sup_{\mathbf{t}\in \mathcal{C}(\Psi)}\min_{1\leq i\leq d} \{\lambda_i(\mathbf{t})\}=\sup_{\mathbf{t}\in \mathcal{C}(\Psi)}\min_{1\leq i\leq d} \{\lambda_i(t)\},
\end{equation*}
 for any $1\leq i\leq d$,
\begin{align}\label{lamada}
    \lambda_i(t)
    &=\sum_{j: \beta_j >\beta_i e^t}1+\sum_{j: \beta_j\leq \beta_i}\frac{ \log \beta_i}{\log \beta_i +t}+\sum_{j:\beta_i e^t\geq\beta_j>\beta_i}\frac{\log \beta_j}{\log \beta_i+t}.
\end{align}
Note that $ \lambda_i(t)$ is a decreasing function in $t$, then 
\[\sup_{\mathbf{t}\in \mathcal{C}(\psi)}\min_{1\leq i\leq d} \{\lambda_i(t)\}=\min_{1\leq i\leq d} \Bigg\{\frac{i\log \beta_i-\sum_{j:\beta_j>\beta_i e^{\tau}}(\log \beta_j-\log \beta_i -\tau)+\sum_{j>i}\beta_j}{\tau+\log \beta_i}\Bigg\}.\]

If $\tau= \infty$, for any $M>0$, let $\psi_M: \mathbb{R}^{+}\to \mathbb{R}^{+}:x\to e^{-x M}$. Then $\mathcal{C}(\psi_M)=\{M\}$.
We can choose  $M$ sufficiently large such that 
\begin{equation*} 
   W(T, \psi, \{f_n\})\subset W(T, \psi_M, \{f_n\}),
\end{equation*}
which yields that 
\begin{equation}
    0\leq  \dim_{\mathrm{H}} W(T,\psi, \{f_n\})\leq  \dim_{\mathrm{H}}W(T, \psi_M, \{f_n\})\leq \min_{1\leq i\leq d}\lambda_i(M),
\end{equation}
where 
\[\lambda_i(M)=\sum_{j: \beta_j >\beta_i e^M}1+\sum_{j: \beta_j\leq \beta_i}\frac{ \log \beta_i}{\log \beta_i +M}+\sum_{j:\beta_i e^M\geq\beta_j>\beta_i}\frac{\log \beta_j}{\log \beta_i+M}.\]
Then we  obtain for any $1\leq i\leq d$,
$ \lim_{M\to \infty}\lambda_i(M)=0.$
Therefore, 
\begin{equation}
    \dim_{\mathrm{H}} W(T,\psi, \{f_n\})=0.
\end{equation}

\section{Proof of Theorem \ref{diagonalizable}}\label{diagona}

Recall that 
\[ W(T,\psi, \{f_n\})=\big\{\mathtt{x}\in \mathbb{T}^d: T^n(\mathtt{x}) \in B(f_n(\mathtt{x}), \psi(n)) \text{\ for \  i.\ m.\ }  n\in \mathbb{N}\big\},\]
and  $T$ is diagonalizable over $\mathbb{Z}$ with the  eigenvalues  $1<\beta_1\leq \beta_2\leq \dots\leq \beta_d$.
Then, there exists a diagonal integer matrix $D$ and an invertible mapping $\phi$ satisfying $\phi \circ T=D \circ \phi$. Note that $\phi$ is a bi-Lipschitz map,  there exist two constants $0<c_1\leq c_2<\infty$ such that for any $x, y\in \mathbb{T}^d$,
 \begin{equation}\label{lip_phi}
     c_1|x-y|\leq |\phi(x)-\phi(y)|\leq c_2|x-y|.
 \end{equation}
 Let 
 \[S:=\big\{\mathtt{x}\in \mathbb{T}^d: T^n(\mathtt{x})\in B\big(f_n(\phi(\mathtt{x})), \frac{c_2}{c_1}\psi(n)\big)  \text{ \  for \  i.m.} \ n\in \mathbb{N} \big\}\]
 and 
 \[S_j=\big\{\mathtt{x}\in \mathbb{T}^d: D^n(\mathtt{x})\in B(\phi(f_n(\mathtt{x})), \frac{c_2^2}{c_j}\psi(n)) \ \text{ i.m.}\  n\in \mathbb{N}\big\} \text{ \ for\ } j=1,2.\]
Since $\{f_n\}$ is a uniform Lipschitz sequence and $\phi$ is a  bi-Lipschitz map, $\{\phi\circ f_n \}$  is also a uniform Lipschitz sequence. 
 By  Corollary \ref{main_corollary}, we can obtain that   
\begin{equation*}
    \dim_{\mathrm{H}}S_1= \dim_{\mathrm{H}}S_2=\min_{1\leq i\leq d} \Bigg\{\frac{i\log \beta_i-\sum_{j:\beta_j>\beta_i e^{\tau}}(\log \beta_j-\log \beta_i -\tau)+\sum_{j>i}\beta_j}{\tau+\log \beta_i}\Bigg\},
\end{equation*}
  which implies that
  the Hausdorff dimension of $S_j$  is independent of the uniform Lipschitz sequence $\{f_n\}$.
 We claim that 
 \begin{align}\label{claim}
    S_2\subset \phi(S)\subset S_1,
 \end{align}
 then
  $\dim_{\mathrm{H}}S_1= \dim_{\mathrm{H}}S_2=\dim_{\mathrm{H}}\phi(S)$.
 By   Lemma \ref{lipschitz function} and the definition of $\phi$, we can obtain  $\dim_{\mathrm{H}}\phi(S)=\dim_{\mathrm{H}}S$.
 Therefore,   we  need only to prove the claim \eqref{claim}.
 
For any $y\in \phi(S)$, there exists a point  $x_0\in S$ such that $y=\phi(x_0)$.
Due to the fact that $ \phi \circ T=D \circ \phi$ and (\ref{lip_phi}),
\[|D^n\phi(x_0)-\phi(f_n(\phi(x_0)))|=|\phi(T^n x_0)-\phi(f_n(\phi(x_0)))|\leq c_2|T^n x_0-f_n(\phi(x_0))|.\]
 Note that $x_0\in S$, then  $|T^n x_0-f_n(\phi(x_0))|<\frac{c_2}{c_1}\psi(n)$ for  infinity many $n\in \mathbb{N}$, which implies that 
 \begin{equation*}
    |D^n(y)-\phi(f_n(y))|= |D^n\phi(x_0)-\phi(f_n(\phi(x_0)))|\leq \frac{c_2^2}{c_1}\psi(n).
 \end{equation*}
 Thus, $y=\phi(x_0)\in S_1$, which implies that $\phi(S)\subset S_1$.
 
 For any  $z\in S_2$, there exists $x_1\in \mathbb{T}^d$ such that $z=\phi(x_1)$, which yields  that 
 \begin{equation*}
  |D^n\phi(x_1)-\phi(f_n(\phi(x_1)))|= |\phi(T^n x_1)-\phi(f_n(\phi(x_1)))| \leq c_2\psi(n),
 \end{equation*}
 for  infinity many $n\in \mathbb{N}$.
This together with  
  \begin{equation*}
    |\phi(T^n x_1)-\phi(f_n(\phi(x_1)))|\geq c_1|T^n x_1-f_n(\phi(x_1))|
 \end{equation*}
implies that $x_1\in S$ and $z=\phi(x_1)\in \phi(S)$,  then $ S_2\subset \phi(S)$.
  
 Thus, we complete the proof of the claim.

\subsection*{Acknowledgements}
 This work was supported partially by Guangdong Natural Science Foundation 2023A1515010691 and China Scholarship Council.
 
\bibliographystyle{amsplain}
\bibliography{main}

\end{document}